\newcounter{oftheorem}[subsection]
\newenvironment{mytheorem}[1]%
{\begin{trivlist}
     
     \refstepcounter{oftheorem}
     \item[\hspace{\labelsep}\bf\thesection.\arabic{oftheorem} #1]}%
{\end{trivlist}}
\newenvironment{definition}{\begin{mytheorem}{Definition}\it}{\end{mytheorem}}
\newenvironment{example}{\begin{mytheorem}{Example}\it}{\end{mytheorem}}
\newenvironment{proposition}{\begin{mytheorem}{Proposition}\it}{\end{mytheorem}}
\newenvironment{theorem}{\begin{mytheorem}{Theorem}\it}{\end{mytheorem}}
\newenvironment{remark}{\begin{mytheorem}{Remark}}{\end{mytheorem}}
\newenvironment{lemma}{\begin{mytheorem}{Lemma}}{\end{mytheorem}}
\begin{document}

\title{Planar vector fields in the kernel of a 1--form}

\author{Stavros Anastassiou\\
Department of Mathematics\\
University of West Macedonia\\ GR-52100 Kastoria, Greece\\
sanastassiou@gmail.com\\
https://orcid.org/0000-0001-5033-4699}

\maketitle

\begin{abstract}
We classify, up to a natural equivalence relation, vector fields of the plane which belong to the kernel of a 1--form. This form can be closed, in which case the vector fields are integrable, or not, in which case the differential of the form defines a, possibly singular, symplectic form. In every case, we provide a fairly complete list of local models for such fields and construct their transversal unfoldings. Thus, the local bifurcations of vector fields of interest can be studied, among them being the integrable fields of the plane.
\end{abstract}

\textbf{Keywords:} singularities of vector fields, local bifurcations, vector fields with an integral \\
\textbf{MSC2010:} 37C15, 37G05, 58K45

\section{Introduction}

The classification of dynamical systems preserving a geometric structure is excessively studied. Among many great references, we would like to point out \cite{Chow Li Wang,Belitskii Kopanskii, Belitskii Kopanskii 2,Banyaga Llave Wayne}, for classification results concerning vector fields preserving a volume/symplectic/contact structure, or possessing a symmetry.

In \cite{Luna, Martins Tari}, the authors deal with the very interesting problem of studying vector fields tangent to a foliation. More specifically, in \cite{Luna}, local models for vector fields tangent to the level sets of a Morse function were given, at least for an open and dense set of such vector fields, while, in \cite{Martins Tari}, the stability of pairs $(\omega,X)$, consisting of an integrable 1--form $\omega$ and a vector field $X$, tangent to the foliation defined by $\omega$, was studied.

Here we are interested in the problem of classifying vector fields which belong to the kernel of a 1--form and begin our study with vector fields of the plane. In section 2,  we present the equivalence relation which preserves not the 1--form of interest, but rather its kernel. This relation induces a partition of vector fields belonging in $\ker (a)$ into equivalence classes, where all the members of the same class share the same orbit structure. We prove that the curve of singularities of the vector field of interest determines the equivalence class of the field and use results obtained in \cite{Zhitomirskii 1}, to show how one can construct local models for each equivalence class.

In section 3, we present the case where the 1--form of interest is not closed. Due to their genericity, we study in detail the 1--forms of Darboux and Martinet, and we provide local models for all, simple, vector fields in their kernel. We also study the kernel of the  Liouville 1--form and construct local models for vector fields in its kernel, provided they are finitely determined. We also show how one can construct transversal unfoldings for these models, to study their bifurcations.

In section 4, we solve the problem in the case where the 1--form is closed. In this case, the vector fields in its kernel are integrable and we present, in detail, the classification and bifurcations of such fields when the first integral is regular at the origin, or possesses a degenerate singularity there. However, by using an example, we demonstrate how our results can be used to study bifurcations of integrable vector fields with an arbitrary first integral.

The last section contains some remarks on future work.

We emphasize that we work in the smooth (i.e. $C^{\infty}$) category. Although, for brevity, we do not state it repeatedly, the study that follows is local. To be more precise, we study germs at the origin of 1--forms and vector fields, of functions $f:(\mathbb{R}^2,0)\rightarrow \mathbb{R}$ and of diffeomorphisms $\phi: (\mathbb{R}^2,0)\rightarrow (\mathbb{R}^2,0)$. 

\section{Orbital (a,b)--conjugacy}

Let $\omega$ be a volume form of the plane. If $a$ is a fixed 1--form, the equation $X_a\ \lrcorner \ \omega = a$ defines a unique vector field $X_a$ corresponding to $a$, which generates the kernel of $a$, $\ker (a):=\{X\in \mathcal{X}(\mathbb{R}^2),a(X)=0\}$, i.e., all members of $\ker (a)$ are of the form $f\cdot X_a$, for some smooth function $f$.

Two differential 1--forms, $a,b$ are conformally equivalent, if a diffeomorphism $\phi$ exists, such that $a=k\cdot \phi^*b$, for some non--zero function $k$. In the case where $a=b$, $\phi$ is called a conformal symmetry of $a$, while we drop the term ``conformal", if $k\equiv 1$.  

There is an analogous notion for vector fields: two vector fields $X_1,X_2$ are called ``orbitally conjugate", if a non--zero function $k$ and a diffeomorphism $\phi$ exists, such that $X_1=k\cdot \phi^*X_2$.

The conformal equivalence of two 1--forms induces a specific orbital conjugacy between their corresponding vector fields.

\begin{definition}
Let $\omega$ be a volume form of the plane and $a,b$ two conformally equivalent 1--forms, i.e. a non--zero function $k$ and a diffeomorphism $\phi$ exists, such that $a=k\cdot \phi^*b$. Two vector fields $X_1,X_2$ will be called orbitally $(a,b)$--conjugate (orbitally $a$--conjugate, in case $a=b$), if a non--zero function $h$ exists, such that $X_1=h\cdot \phi^*X_2$. We shall denote this equivalence relation by $X_1\sim X_2$.
\end{definition}

We remark that the difference between the classical definition of orbital conjugacy of vector fields and the orbital $(a,b)$--conjugacy is that in the first case the diffeomorphism conjugating the vector fields can be chosen freely, while in the second case, the diffeomorphism should define a conformal equivalence between the forms $a,b$ as well. 

\begin{lemma}
Let $\omega$ be a volume form of the plane and $a,b$ two 1--forms. The two forms are conformally equivalent if, and only if, their corresponding vector fields are orbitally $(a,b)$--conjugate.
\end{lemma}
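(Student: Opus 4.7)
The plan is to translate everything through the defining relation $X_a \lrcorner \omega = a$ (and similarly for $b$), using naturality of the interior product under pullback together with the fact that in dimension $2$ any top form is a function multiple of $\omega$.

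First I would record the basic identity: since $\phi$ is a local diffeomorphism of the plane and $\Omega^2(\mathbb{R}^2)$ is rank one over $C^\infty(\mathbb{R}^2)$, one has $\phi^*\omega = J \cdot \omega$ for a nowhere-vanishing smooth function $J$ (the Jacobian determinant of $\phi$ with respect to the chosen volume). Naturality of the interior product under pullback then yields
\[
\phi^* b = \phi^*(X_b \lrcorner \omega) = (\phi^* X_b) \lrcorner (\phi^* \omega) = J \cdot \bigl( (\phi^* X_b) \lrcorner \omega \bigr).
\]
Thus the vector field dual (via $\omega$) to $\phi^* b$ is precisely $J \cdot \phi^* X_b$; this little formula does all the work.

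For the forward implication, suppose $a = k \cdot \phi^* b$. Substituting the identity above gives $a = (kJ) \cdot \bigl( (\phi^* X_b) \lrcorner \omega \bigr) = \bigl( (kJ)\,\phi^*X_b \bigr) \lrcorner \omega$. Comparing with $a = X_a \lrcorner \omega$ and invoking the uniqueness of the vector field dual to a 1-form under a volume form, I conclude $X_a = h \cdot \phi^* X_b$ with $h := kJ$, which is nowhere zero; this is exactly orbital $(a,b)$-conjugacy. For the converse, suppose $X_a = h \cdot \phi^* X_b$ for the same $\phi$. Contracting with $\omega$ and running the identity backwards,
\[
a = X_a \lrcorner \omega = h \cdot \bigl( (\phi^* X_b) \lrcorner \omega \bigr) = \tfrac{h}{J}\, \phi^* b,
\]
so $a$ and $b$ are conformally equivalent with conformal factor $k := h/J$, which is again nowhere zero.

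The argument is essentially a bookkeeping exercise, so there is no real obstacle; the only subtlety worth flagging is the need to use the same diffeomorphism $\phi$ on both sides of the equivalence, which is already built into the definition of orbital $(a,b)$-conjugacy and so poses no issue. I would also briefly note that the uniqueness step at the end of the forward direction relies on $\omega$ being nondegenerate, which is exactly the hypothesis that $\omega$ is a volume form.
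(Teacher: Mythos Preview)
Your proof is correct and follows essentially the same approach as the paper: both establish $X_a = k\cdot\det(d\phi)\cdot\phi^*X_b$ by tracking how the Jacobian enters when pulling back the defining relation $X_b\lrcorner\omega=b$, and then reverse the computation for the converse. Your use of the naturality identity $\phi^*(X_b\lrcorner\omega)=(\phi^*X_b)\lrcorner(\phi^*\omega)$ together with $\phi^*\omega=J\omega$ is just a slightly more packaged version of the paper's pointwise argument testing against an arbitrary vector field $V$.
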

\begin{proof}
Let us suppose that a diffeomorphism $\phi$ and a non--zero function $k$ of the plane exist, such that $a=k\cdot \phi ^* b$. If $V$ is a vector field of the plane, we have:
\begin{equation*}
\begin{split}
a=k\cdot \phi^*b & \Leftrightarrow V\ \lrcorner \ a = V\ \lrcorner \ k\cdot \phi^*b \Leftrightarrow \\
\Leftrightarrow \omega (X_a, V) & = V \ \lrcorner \ k\cdot \phi^* \Big( \omega (X_b,\cdot)\Big) =\\
& = k\cdot \omega (X_b,\phi_*V) = \\
& = k\cdot \omega (\phi_*Y,\phi_*V)=\\
& = k\cdot \det (d\phi) \cdot \Big(\omega (\phi^*X_b,V)\Big),
\end{split}
\end{equation*}
where $Y=\phi^*X_b$. Hence $X_a=k\cdot \det(d\phi)\cdot \Big(\phi^*X_b\Big)$ and, since $\det(d\phi)\neq 0$, the vector fields $X_a,X_b$ are orbitally $(a,b)$--conjugate. 

On the other hand, if $X_a=\lambda \cdot \phi^*X_b$ and $V$ a vector field, we have:
\begin{equation*}
\begin{split}
a(V) & =\omega(X_a,V)=\omega(\lambda \cdot \phi^*X_b,V)=\omega(\lambda\cdot \phi^*X_b,\phi^*(\phi_*V))=\\
& =\lambda\cdot (\det d\phi)^{-1}\cdot\omega(X_b,\phi_*V)=k\cdot \phi^*b(V),
\end{split}
\end{equation*}
for $k=\lambda \cdot (\det (d\phi))^{-1}$.
\end{proof}

The conformal equivalence of forms induces an equivalence relation on the ring $\mathcal{E}$ of smooth function--germs at the origin.

\begin{definition}
Let $\omega$ be a volume form of the plane and $a,b$ two conformally equivalent 1--forms, i.e. a non--zero function $k$ and a diffeomorphism exists such that $a=k\cdot \phi^*b$. Two function--germs $f_1,f_2$ will be called $\mathcal{K}_{(a,b)}$--equivalent (or $\mathcal{K}_a$--equivalent, in case $a=b$), if a non--zero function $h$ exists, such that $f_1=h\cdot \phi^*f_2$. We shall denote this equivalence relation by $f_1\sim f_2$.
\end{definition}

In what follows, we shall frequently use the following:

\begin{proposition}
Let $\omega$ be a volume form of the plane, and $a,b$ two conformally equivalent 1--forms, with corresponding vector fields $X_a,X_b$. The function germs $f_1,f_2$ are $\mathcal{K}_{(a,b)}$--equivalent if, and only if, the vector fields $f_1\cdot X_a,f_2\cdot X_b$ are orbitally $(a,b)$--equivalent.
\end{proposition}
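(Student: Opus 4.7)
The plan is to reduce the statement to the preceding Lemma, which already translates conformal equivalence of $a,b$ into the identity $X_a = K\cdot \phi^*X_b$ with $K := k\cdot \det(d\phi)$ non-zero. Both equivalence relations in the proposition are defined with respect to the \emph{same} diffeomorphism $\phi$ that witnesses $a = k\cdot \phi^*b$, so the proof becomes a matter of tracking scalar factors through a pullback.

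For the forward implication, I would suppose $f_1 = h\cdot \phi^*f_2$ with $h$ non-zero and simply multiply by $X_a$. Using the Lemma,
\[
f_1\cdot X_a \;=\; (h\cdot \phi^*f_2)\cdot (K\cdot \phi^*X_b) \;=\; (hK)\cdot \phi^*(f_2\cdot X_b),
\]
since pullback commutes with scalar multiplication by a function. As $hK$ is non-zero, this is exactly orbital $(a,b)$-conjugacy of $f_1 X_a$ and $f_2 X_b$.

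For the converse, I would assume $f_1\cdot X_a = H\cdot \phi^*(f_2\cdot X_b) = H\cdot (\phi^*f_2)\cdot (\phi^*X_b)$ for some non-zero $H$, and again substitute $X_a = K\cdot \phi^*X_b$ on the left. This yields
\[
\bigl(f_1 K - H\cdot \phi^*f_2\bigr)\cdot \phi^*X_b \;=\; 0.
\]
Setting $h := H/K$ (non-zero), the desired relation $f_1 = h\cdot \phi^*f_2$ follows provided the scalar factor $f_1 K - H\cdot \phi^*f_2$ must vanish.

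The main obstacle, and the only substantive step, is justifying this cancellation of $\phi^*X_b$. I would argue that the germ $b$ may be assumed non-trivial (otherwise the claim is vacuous), and then $X_b$—being the image of $b$ under the $\mathcal{E}$-linear isomorphism $Y\mapsto Y\lrcorner\,\omega$—vanishes on a nowhere-dense set. Hence $\phi^*X_b$ is non-zero on an open dense subset, so the scalar factor $f_1 K - H\cdot \phi^*f_2$ vanishes there and, by continuity, everywhere. Once this point is addressed, the remainder of the argument is a bookkeeping exercise in non-zero functions.
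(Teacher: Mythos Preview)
Your argument is essentially the paper's own proof: both directions proceed by substituting $X_a = k\cdot(\det d\phi)\cdot\phi^*X_b$ from the Lemma and tracking the scalar factors, and the paper likewise cancels $\phi^*X_b$ in the converse direction (though without the density justification you supply). You are in fact more careful than the paper in isolating that cancellation as the one nontrivial step.
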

\begin{proof}
Since $a,b$ are conformally equivalent, a non--zero function $k$ and a local diffeomorphism exists such that $a=k\cdot \phi^*b$. It follows that $X_a=k\cdot (\det d\phi)\cdot \phi^*X_b$. Let $f_1=\lambda \cdot (f_2\circ \phi)$.

We have:
\begin{equation*}
\begin{split}
f_1\cdot X_a & =\lambda \cdot (f_2\circ \phi)\cdot k\cdot (\det d\phi)\cdot \phi^*X_b=\\
& = \lambda \cdot k \cdot (\det d\phi)\cdot \phi^*(f_2X_b).
\end{split}
\end{equation*}
The vector fields $f_1\cdot X_a,f_2\cdot X_b$ are therefore orbitally $(a,b)$--conjugate.

On the other hand, suppose that $f_1\cdot X_a,f_2 \cdot X_b$ are orbitally $(a,b)$--conjugate. A non--zero function $\lambda$ exists, such that:
\begin{equation*}
\begin{split}
f_1\cdot X_a & =\lambda \cdot \phi^*(f_2\cdot X_b)\Rightarrow \\
\Rightarrow f_1\cdot k\cdot (\det d\phi)\cdot \phi^*X_b &=\lambda \cdot (f_2\circ \phi)\cdot \phi^*X_b \Rightarrow \\
\Rightarrow f_1 & =\frac{\lambda}{k\cdot (\det d\phi)}\cdot f_2\circ \phi,
\end{split}
\end{equation*}
as claimed.
\end{proof}

\begin{example}
Let $b=dx$ and consider the non--zero function--germ $k(x,y)=2+xy$ and the local diffeomorphism $\phi(x,y)=(x+x^2,x+y)$. We then get that $a=k\cdot \phi^* b=(2+xy)\cdot (1+2x)\cdot dx$.

The vector fields corresponding to $a,b$, with respect to the standard volume form $dx\wedge dy$, are $X_a=-(2+xy)\cdot (1+2x)\frac{\partial}{\partial y}$ and $X_b=-\frac{\partial}{\partial y}$. Obviously, $X_a=k\cdot (\det d\phi)\cdot \phi^*X_b$. 

Moreover, for the the functions $f_1(x,y)=e^x(2x+y+x^2)$ and $f_2(x,y)=x+y$ the equation $f_1(x,y)=e^x\cdot f_2(\phi(x,y))$ holds and it is evident that $f_1\cdot X_a=e^x\cdot (2+xy)\cdot \phi^*(f_2\cdot X_b)$.
\end{example}

We are here interested in the case where $a=b$. In this case, the diffeomorphism $\phi$, above, is a conformal symmetry of $a$ and we wish to classify vector fields belonging in the kernel of $a$, that is, vector fields of the form $f_1\cdot X_a,f_2\cdot X_a,\ f_1,f_2\in \mathcal{E}$, up to orbital $a$--conjugacy. As we saw, to achieve that, we must classify the function germs $f_1,f_2$ up to $\mathcal{K}_a$--equivalence.

Let $f_0$ be a function and consider a curve belonging in $\mathcal{E}$, passing through $f_0$ for $t=0$, consisting of functions that are $\mathcal{K}_a$--equivalent to $f_0$. This curve is of the form $f_t=g_t\cdot (f_0 \circ \phi_t)$, where, $\forall t \in \mathbb{R}$, $g_t$ is a non--zero function, with $g_0(x,y)=1$ and $\phi_t$ is a conformal symmetry of $a$, satisfying $\phi_0=Id$. Differentiating $f_t$ with respect to $t$, we get:
\[
\frac{\partial f_t}{\partial t}=\frac{\partial g_t}{\partial t}\cdot (f_0\circ \phi_t)+g_t\cdot (k\cdot \mathcal{L}_{X_a \circ \phi_t}(f_0\circ \phi_t)),\ \ (\star)
\]    
where $X_a$  is the vector field defined as follows:
\[
\forall t,\  \frac{\partial \phi_t}{\partial t}=k\cdot (X_a \circ \phi_t),
\]
and $\mathcal{L}$ stands for the Lie derivative. The non--zero term $k\in \mathcal{E}$ is present in the equation above since $\phi_t$ preserves the foliation defined by $X_a$, not $X_a$ itself. 

Evaluating at $t=0$, relation $(\star)$ becomes:
\[
\frac{\partial f_t}{\partial t}\Big| _{t=0}=\frac{\partial g_t}{\partial 
t}\Big| _{t=0} \cdot f_0 + k\cdot \mathcal{L}_{X_a}(f_0).
\]
Since function $k$ is chosen freely, while $g_t$ depends on the curve $f_t$, the calculation above motivates the following:

\begin{definition}
Let $f\in \mathcal{E}$ and denote by $\mathcal{O}_f$ the set of all members of $\mathcal{E}$ which are $\mathcal{K}_a$--equivalent to $f$. The tangent space of $\mathcal{O}_f$ at $f$ is defined to be $\langle f,\mathcal{L}_{X_a}f\rangle$. The codimension of $f$ (and of the vector field $f\cdot X_a$), with respect to the $\mathcal{K}_a$--equivalence relation (orbital $a$--conjugacy), is defined to be $codim(f)=codim(f\cdot X_a):=\dim \big(\mathcal{E} / \langle f,\mathcal{L}_{X_a}f\rangle \big)$.  
\end{definition}

Recall that, in the definitions above, the conjugating diffeomorphism $\phi$ is a conformal symmetry of the 1--form $a$, thus it preserves not the vector field $X_a$ itself, but rather the, possibly singular, foliation defined by it. Since we do not distinguish between a function and its non--zero multiples, we actually wish to classify principal ideals of the ring $\mathcal{E}$ (that is, ideals of the form $\langle f\rangle,\ f\in \mathcal{E}$) up to diffeomorphisms preserving the foliation defined by $X_a$.

In \cite{Zhitomirskii 1}, the problem of classifying plane curves:
\[
\{f=0\}:=\{(x,y)\in \mathbb{R}^2,f(x,y)=0\},
\]
up to diffeomorphisms preserving the foliation given by some fixed vector field $X$, was solved. Assuming that the plane curves satisfy the property of zeros (that is, two functions, vanishing at exactly the same points, are non--zero multiples of each other) the classification given there coincides with the classification of principal ideals of $\mathcal{E}$. Identifying the ideal $\langle f\rangle$ with the curve $\{f=0\}$ and using the results of \cite{Zhitomirskii 1}, we are able to provide our first classification results.

A 1--form $a$ (or a vector field $X$) is regular at the origin if $a(0,0)\neq 0$ ($X(0,0)\neq 0$), otherwise we call it ``singular". A curve $\{f=0\}$, passing through the origin, is regular if $d_0f\neq 0$ and singular otherwise. Two curves $\{f=0\},\{g=0\}$ have order of tangency $k\in \mathbb{N}$ if $j^kf(0)=j^kg(0)$, where $j^kf$ denotes the k--jet of the function $f$.

\begin{theorem}[on regular 1--forms] \label{Theorem regular}\\
Let $a$ be a regular 1--form of the plane, with corresponding vector field $X_a$, and $X\in \ker (a),\ X=f\cdot X_a,\ f\in \mathcal{E}$.
\begin{enumerate}
\item {If $X$ is regular at the origin, $X\sim X_{a}$. The equivalence class of these vector fields is of codimension $0$ in $\ker(a)$.}
\item {If the zeroes of $X$ form a regular curve passing through the origin, which curve has contact of order $k\in \mathbb{N}$ with the $x$--axis, $X\sim (y-x^{k+1})\cdot X_a$. This equivalence class is of codimension $k$ in $\ker(a)$.}
\item {If the curve of the singularities of $X$ is singular at the origin but simple, $X$ is orbitally $a$--conjugate with one of the following vector fields:
\[
X_1^k=(xy-x^k)\cdot X_a,\ X_k^{k+1}=(x^2\pm y^{k+1})\cdot X_a,\ X_2^4=(y^2+x^3)\cdot X_a.\]
Here, $k\geq 2$ and the codimension of their equivalence classes is $k,k+1,4$ respectively.}
\end{enumerate}
\end{theorem}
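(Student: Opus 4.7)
The plan is to combine two ingredients: a flowbox for $X_a$, which puts the problem into standard local coordinates, and the curve classification of \cite{Zhitomirskii 1}, which supplies the normal forms directly.

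Since $a$ is regular at the origin, $X_a$ is non-vanishing there, and, in two dimensions, every non-vanishing 1--form admits an integrating factor; after a conformal change of coordinates we may therefore assume $a = dy$, so that $X_a = \partial_x$, $\mathcal{L}_{X_a}f = f_x$ and the kernel foliation is horizontal. By the Proposition preceding the theorem, orbital $a$--conjugacy of $f_1\cdot X_a$ and $f_2\cdot X_a$ coincides with $\mathcal{K}_a$--equivalence of $f_1,f_2$, which, as explained in the text just above, is the classification of the principal ideals $\langle f_i\rangle$---equivalently of the curves $\{f_i=0\}$---up to diffeomorphisms preserving the horizontal foliation.

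This is exactly the problem solved in \cite{Zhitomirskii 1}. Reading off its list of simple normal forms yields: $f\sim 1$ when $\{f=0\}=\emptyset$ (Case 1); $f\sim y-x^{k+1}$ when the zero--set is a regular curve tangent to the horizontal leaf to order exactly $k$ (Case 2), the exponent $k+1$ matching the order of tangency $k$ of the $k$--jet definition by a direct comparison; and $f\sim xy-x^k$, $x^2\pm y^{k+1}$, or $y^2+x^3$ when the curve is singular but in the simple list (Case 3).

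The codimension assertions are then routine reductions of the ideal $\langle f,f_x\rangle$ from each normal form. For instance, in Case 2 one quickly finds $\langle f,f_x\rangle = \langle y,x^k\rangle$, whose quotient is spanned by $1,x,\dots,x^{k-1}$, giving codimension $k$. The three Case 3 ideals reduce analogously to $\langle y-kx^{k-1},x^k\rangle$, $\langle x,y^{k+1}\rangle$ and $\langle x^2,y^2\rangle$, yielding codimensions $k$, $k+1$ and $4$ respectively. The substance of the theorem lies in the curve classification of \cite{Zhitomirskii 1}; the only subtlety I expect is checking that this geometric classification gives exactly the algebraic codimensions prescribed in the paper, which the computations above confirm.
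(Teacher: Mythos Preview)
Your proposal is correct and follows essentially the same route as the paper: pass to flowbox coordinates for $X_a$ and reduce to the classification of curves in a foliated plane from \cite{Zhitomirskii 1}. Two minor differences are worth noting. For Case~2 the paper does not cite \cite{Zhitomirskii 1} but gives a direct construction---writing the regular zero curve as $y=x^{k+1}h(x)$ with $h(0)\neq 0$ and using the foliation-preserving rescaling $\phi(x,y)=(x\,h^{1/(k+1)}(x),y)$ to kill $h$---so your argument leans a bit more on the reference there; conversely, you actually carry out the codimension computations of $\mathcal{E}/\langle f,f_x\rangle$ for each normal form, which the paper only asserts, and your reductions (to $\langle y,x^k\rangle$, $\langle y-kx^{k-1},x^k\rangle$, $\langle x,y^{k+1}\rangle$, $\langle x^2,y^2\rangle$) are correct.
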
 
\begin{proof}
\begin{enumerate}\item[]
\item {Since $X=f\cdot X_a$ is regular, function $f$ does not vanish at the origin, in which case $f\sim 1$ (just take $\phi=Id,\ g=f^{-1}$ in the definition of $\mathcal{K}_a$--equivalence). A simple calculation confirms that $codim(f)=0$.}
\item {Take local coordinates in which $X_a=\frac{\partial}{\partial x}$. Let us suppose that the, regular, curve of singularities of $X$ is transverse to the $y$--axis, i.e. it is of the form $(x,g(x))$. Since it has order of tangency $k$ with the $x$--axis, it can be written as $(x,x^{k+1}h(x))$, where $h(0)\neq 0$. This curve corresponds to the function $\tilde{f}(x,y)=y-x^{k+1}h(x)$, while the local diffeomorphism $\phi(x,y)=(x\cdot h^{\frac{1}{k+1}}(x),y)$ preserves the foliation defined by $X_a$ and satisfies equation $f\circ \phi=\tilde{f}$, for $f(x,y)=y-x^{k+1}$.

If the, regular, curve is transverse to the $x$--axis, it is of the form $(g(y),y)$ and it corresponds to the function $k(x,y)=x-g(y)$. The local diffeomorphism $(x,y)\mapsto (x+g(y),y)$ preserves the foliation defined by $X_a$ and satisfies equation $k\circ \phi =x$, which function is $\mathcal{K}_a$--equivalent to the function $y-x$.}
\item{This follows from \cite[Theorem 5.2]{Zhitomirskii 1}: A curve $\{f=0\}$ defined on a plane which is equipped with a foliation is simple if the pair consisting of the curve and the foliation is simple, with respect to the action of diffeomorphisms preserving this foliation. In this case, the curve is equivalent to one of the following curves:
\[
\{xy+x^k=0\},\ \{x^2\pm y^{k+1}=0\},\ \{y^2+x^3=0\},
\]}
hence the conclusion.
\end{enumerate}
\end{proof}

\begin{remark}
In the third statement of \ref{Theorem regular}, the ``$\pm$" sign should be replaced with $``+"$, in case $k$ is even.
\end{remark}

The analogous statement for singular 1--forms follows.

\begin{theorem}[on singular 1--forms] \label{Theorem singular}\\
Let $a$ be a singular 1--form of the plane and $X\in \ker (a),\ X=f\cdot X_a,\ f\in \mathcal{E}$.
\begin{enumerate}
\item {If $f(0,0)\neq 0$, $X\sim X_{a}$. Their equivalence class forms a set of codimension $0$ in $\ker(a)$.}

\item {If $f(0,0)=0$ and the curve $\{f=0\}$ is of finite codimension $k$, $f\sim j^kf$ and  therefore $X\sim j^kf\cdot X_a$.}
\end{enumerate}
\end{theorem}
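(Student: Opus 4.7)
The first statement is immediate from the definitions: if $f(0,0)\neq 0$, then $f$ is a unit of $\mathcal{E}$, so choosing $h=f$ and $\phi=\mathrm{Id}$ in the definition of $\mathcal{K}_a$-equivalence exhibits $f\sim 1$, and the proposition relating $\mathcal{K}_a$-equivalence to orbital $a$-conjugacy then gives $f\cdot X_a\sim 1\cdot X_a=X_a$. The tangent space $\langle f,\mathcal{L}_{X_a}f\rangle$ contains the unit $f$, so it equals $\mathcal{E}$ and the codimension is zero.

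For the second statement my plan is to establish a finite determinacy theorem for $\mathcal{K}_a$-equivalence by a Moser-style homotopy. First I would write $f=j^kf+r$ with $r\in\mathfrak{m}^{k+1}$ and consider the straight-line path $f_t=j^kf+tr$, $t\in[0,1]$. The goal is to produce, smoothly in $t$, nonvanishing germs $g_t$ with $g_0\equiv 1$ and conformal symmetries $\phi_t$ of $a$ with $\phi_0=\mathrm{Id}$ satisfying $f_t=g_t\cdot(j^kf\circ\phi_t)$; evaluating at $t=1$ then yields $f\sim j^kf$, whence $X=f\cdot X_a\sim j^kf\cdot X_a$ by the proposition.

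Differentiating this relation as in the derivation of $(\star)$ reduces the construction to solving, at each $t$, the infinitesimal equation
\[
r=\alpha_t\cdot f_t+\beta_t\cdot\mathcal{L}_{X_a}f_t,\qquad \alpha_t,\beta_t\in\mathcal{E},
\]
smoothly in $t$; once these are in hand, $\phi_t$ is recovered as the time-$t$ flow of the time-dependent field $\beta_sX_a$ (which is automatically a conformal symmetry of $a$, since it is tangent to $\ker(a)$), and $g_t$ is obtained by integrating a scalar linear ODE in $t$. The essential input making the infinitesimal equation solvable is that, because $a$ is singular, $X_a(0)=0$, so $\mathcal{L}_{X_a}$ preserves the $\mathfrak{m}$-adic filtration: $\mathcal{L}_{X_a}(\mathfrak{m}^j)\subset\mathfrak{m}^j$ for every $j$. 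Combined with the hypothesis $\dim(\mathcal{E}/\langle f,\mathcal{L}_{X_a}f\rangle)=k$, a Nakayama argument in the finite-dimensional local algebra $\mathcal{E}/\langle f_t,\mathcal{L}_{X_a}f_t\rangle$ (whose maximal ideal is nilpotent) yields $\mathfrak{m}^{k+1}\subset\langle f_t,\mathcal{L}_{X_a}f_t\rangle$ uniformly in $t\in[0,1]$, so the perturbation $r$ lies in this ideal for every $t$.

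The hard part will be upgrading pointwise-in-$t$ solvability to a \emph{smoothly} chosen pair $(\alpha_t,\beta_t)$, and guaranteeing that the resulting flow of $\beta_tX_a$ exists on one common neighborhood of the origin for all $t\in[0,1]$. For the first I would exploit that $f_t$ is polynomial in $t$ and select the decomposition by linear algebra in the finite-dimensional quotient, obtaining $\alpha_t,\beta_t$ that depend polynomially, hence smoothly, on $t$. The second is then automatic: since $X_a(0)=0$, any smooth multiple of $X_a$ integrates over the compact interval $[0,1]$ on a common neighborhood of the origin.
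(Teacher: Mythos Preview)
Your treatment of part~(1) matches the paper's exactly. For part~(2) the paper gives no argument of its own: it simply invokes \cite[Theorem~A]{Zhitomirskii 1}, which is precisely the finite-determinacy statement $f\sim j^kf$ for curves in a foliated plane. What you have sketched is, in effect, a proof of that theorem via the standard Mather--Moser homotopy method. The key observation you isolate---that singularity of $a$ forces $X_a(0)=0$, so $\mathcal{L}_{X_a}$ preserves the $\mathfrak{m}$-adic filtration and Nakayama then yields $\mathfrak{m}^{k}\subset\langle f_t,\mathcal{L}_{X_a}f_t\rangle$ for every $t$---is exactly the mechanism behind Zhitomirskii's result and is the reason the singular case admits such a clean determinacy statement. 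Your verification that the flow of $\beta_t X_a$ is a conformal symmetry of $a$ is also correct (since $X_a\,\lrcorner\,da$ is a functional multiple of $a$).

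One step deserves sharpening. ``Linear algebra in the finite-dimensional quotient'' only solves $r=\alpha_t f_t+\beta_t\mathcal{L}_{X_a}f_t$ modulo $\mathfrak{m}^N$, not in $\mathcal{E}$. A cleaner device: since $r\in\mathfrak{m}\cdot\mathfrak{m}^k\subset\mathfrak{m}\cdot\langle f_0,\mathcal{L}_{X_a}f_0\rangle$, write $r=A f_0+B\,\mathcal{L}_{X_a}f_0$ and $\mathcal{L}_{X_a}r=A'f_0+B'\mathcal{L}_{X_a}f_0$ with $A,B,A',B'\in\mathfrak{m}$. Then
\[
\begin{pmatrix} f_t\\ \mathcal{L}_{X_a}f_t\end{pmatrix}=M_t\begin{pmatrix} f_0\\ \mathcal{L}_{X_a}f_0\end{pmatrix},\qquad M_t=I+t\begin{pmatrix}A&B\\A'&B'\end{pmatrix},
\]
and $M_t$ is invertible over $\mathcal{E}$ for all $t\in[0,1]$ because $\det M_t\equiv 1$ at the origin. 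Applying $M_t^{-1}$ converts the $t$-independent decomposition of $r$ into $r=\alpha_t f_t+\beta_t\,\mathcal{L}_{X_a}f_t$ with $(\alpha_t,\beta_t)=(A,B)M_t^{-1}$ rational, hence smooth, in $t$. With this amendment your argument goes through and supplies what the paper outsources to the reference.
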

\begin{proof}
\begin{enumerate}\item[]
\item {Since $f(0,0)\neq 0$, as before, $f\sim 1$.}
\item {This follows from \cite[Theorem A]{Zhitomirskii 1}. According to this theorem, and using our notation, $f\sim j^kf$ and thus $X\sim j^kf\cdot X_a$.}
\end{enumerate}
\end{proof}

In the next sections, using the two theorems above, we shall construct local models for vector fields belonging in the kernel of various  1--forms. 

\section{The case of non--closed forms}

\subsection{The 1--forms of Darboux and Martinet }

In the neighbourhood of every point of the plane, except possibly from points forming a zero--dimensional set, a 1--form is equivalent to either the Darboux model $a_D=(1+x)dy$, or to the Martinet model $a_M=(1\pm x^2)dy$. These two models are the only stable models for 1--forms of the plane, see \cite{Zhitomirskii 2}.

We note that both forms are regular, while $da_D$ equals the standard symplectic form of the plane $dx\wedge dy$ and $da_M$ equals $\pm 2xdx\wedge dy$, which does not define a symplectic structure everywhere, but is nevertheless related to Hamiltonian systems with constraints, see \cite{Kourliouros 1}. 

The vector fields, corresponding to these two 1--forms are:
\[
X_{a_D}=-(1+x)\frac{\partial}{\partial x},\ X_{a_M}=-(1\pm x^2)\frac{\partial}{\partial x},
\]
respectively.

The next theorem classifies vector fields belonging in $\ker(a)$, where $a$ is either the Darboux or the Martinet model. We omit the proof, since it is just an application of \ref{Theorem regular}. 

\begin{theorem}\label{Theorem Darboux Martinet}
Let $X\in \ker (a)$, where $a=a_D$ or $a=a_M$.
\begin{enumerate}
\item {If $X$ is regular at the origin, $X\sim X_{a}$. Their equivalence class forms a set of codimension $0$ in $\ker(a)$.}
\item {If the zeroes of $X$ form a regular curve passing through the origin, which curve has contact of order $k\in \mathbb{N}$ with the $x$--axis, $X\sim f\cdot X_a$, where $f(x,y)=y-x^{k+1}$. This equivalence class is of codimension $k$ in $\ker(a)$.}
\item {If the curve of the singularities of $X$ is singular at the origin but simple, $X$ is orbitally $a$--conjugate with one of the following vector fields:
\[
X_1^k=(xy-x^k)\cdot X_a,\ X_k^{k+1}=(x^2\pm y^{k+1})\cdot X_a,\ X_2^4=(y^2+x^3)\cdot X_a.\]
Here, $k\geq 2$ and the codimension of their equivalence classes is $k,k+1,4$ respectively. If $k$ is even, the sign $``\pm"$ should be changed to $``+"$.}
\end{enumerate}
\end{theorem}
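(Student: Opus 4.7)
The plan is to observe that both the Darboux form $a_D=(1+x)dy$ and the Martinet form $a_M=(1\pm x^2)dy$ are regular 1--forms at the origin, because the coefficient of $dy$ in each equals $1$ at $(0,0)$, so the hypothesis of Theorem \ref{Theorem regular} is satisfied. With that in hand, the statement becomes an immediate translation of the three cases of Theorem \ref{Theorem regular} to this particular choice of $a$.

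First I would verify, from the defining relation $X_a\ \lrcorner\ (dx\wedge dy) = a$, that the dual vector fields are exactly $X_{a_D}=-(1+x)\frac{\partial}{\partial x}$ and $X_{a_M}=-(1\pm x^2)\frac{\partial}{\partial x}$, matching the formulas given in the paragraph preceding the theorem. Next, recalling that every $X\in\ker(a)$ can be written $X=f\cdot X_a$ for some $f\in\mathcal{E}$, I would invoke the earlier proposition, which identifies orbital $a$--conjugacy of $f_1\cdot X_a$ and $f_2\cdot X_a$ with $\mathcal{K}_a$--equivalence of $f_1$ and $f_2$. This reduces the classification of vector fields in $\ker(a)$ to the classification of their coefficient germs up to $\mathcal{K}_a$--equivalence.

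The three cases of the theorem then correspond bijectively to the three cases of Theorem \ref{Theorem regular}. In case (1), $f(0,0)\neq 0$, so $f\sim 1$ and $X\sim X_a$, giving codimension $0$. In case (2), a regular zero--curve with contact of order $k$ with the $x$--axis yields $f\sim y-x^{k+1}$ and codimension $k$. In case (3), the three normal forms $xy-x^k$, $x^2\pm y^{k+1}$ and $y^2+x^3$ from the Zhitomirskii classification transfer verbatim, together with their codimension counts and with the remark that $``\pm"$ becomes $``+"$ whenever $k$ is even.

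There is no real obstacle: the substantive work has already been carried out in Theorem \ref{Theorem regular} and, underneath it, in \cite{Zhitomirskii 1}; the present theorem is a direct specialization. The only point worth double--checking, for the sake of rigor, is that the proof of Theorem \ref{Theorem regular} uses nothing about $a$ beyond its regularity at the origin --- in particular, neither closedness nor any structural property of $da$ --- so the argument applies to both $a_D$ and $a_M$ uniformly, even though $da_M$ fails to be symplectic along $\{x=0\}$.
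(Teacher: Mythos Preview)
Your proposal is correct and matches the paper's own approach exactly: the paper explicitly states that the proof is omitted because it is ``just an application of \ref{Theorem regular}'', which is precisely the specialization you outline. Your additional remarks---verifying regularity of $a_D,a_M$ at the origin, computing $X_{a_D},X_{a_M}$, and noting that Theorem~\ref{Theorem regular} requires nothing of $a$ beyond regularity---are all valid and make the application fully explicit.
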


We now wish to study the local bifurcations of the vector fields belonging in $\ker(a_D)$ and $\ker(a_M)$. To achieve this, we make use of the fact that the mapping $f\mapsto f\cdot X$ is a linear isomorphism. Thus, the construction of transversal unfoldings for vector fields belonging in the kernel of a 1--form can be achieved, by studying transversal unfoldings of functions.

Since the equivalence class of $X_a$ is of codimension 0, this vector field undergoes no local bifurcations. Generic bifurcations of codimension $k$ are given in the following:

\begin{theorem}\label{Theorem bif Darboux Martinet}
Let $a$ be either the Darboux or Martinet 1--form. The set of vector fields belonging in $\ker(a)$, having a curve of fixed points regular at the origin and with order of contact $k$ with the $x$--axis, has codimension k in $\ker (a)$. All these vector fields are orbitally $a$--conjugate to the $(y-x^{k+1}) \cdot X_a$ vector field. The family of vector fields:
\[
(y+\sum_{n=0}^{k-1}c_{n}x^n-x^{k+1})\cdot X_{a}
\]
intersects, for $c_i=0,\ i=0,..,k-1$, this equivalence class transversely.
\end{theorem}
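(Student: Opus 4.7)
The first two statements just reassemble Theorem~3.1: for a regular 1-form $a$, the class of vector fields in $\ker(a)$ whose zero locus is a regular curve of order-$k$ contact with the $x$-axis is shown there to coincide with the orbit of $(y-x^{k+1})\cdot X_a$, with codimension $k$. I therefore concentrate on transversality of the proposed unfolding.

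The plan is to exploit the linear isomorphism $f\mapsto f\cdot X_a$ from $\mathcal{E}$ to $\ker(a)$ highlighted just before the theorem, which converts the problem into a statement about unfoldings of the function-germ $f_0(x,y):=y-x^{k+1}$ under $\mathcal{K}_a$-equivalence. By the definition preceding the theorem, the tangent space at $f_0$ to its $\mathcal{K}_a$-orbit is the ideal $T=\langle f_0,\mathcal{L}_{X_a}f_0\rangle\subset\mathcal{E}$, and transversality of the family $f_c=f_0+\sum_{n=0}^{k-1}c_n x^n$ at $c=0$ amounts to the assertion that the infinitesimal deformations $\partial_{c_n}f_c\big|_{c=0}=x^n$, for $n=0,\ldots,k-1$, project to a basis of $\mathcal{E}/T$.

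To verify this I will compute $\mathcal{L}_{X_a}f_0$ explicitly: in both the Darboux and the Martinet case one has $X_a=-u(x)\,\partial/\partial x$ with $u$ a unit at the origin ($u=1+x$ or $u=1\pm x^2$), so $\mathcal{L}_{X_a}f_0=(k+1)\,u(x)\,x^k$. The key step is then a change-of-ring argument. Since $f_0$ is linear in $y$, Taylor expansion in the $y$-variable (equivalently, Malgrange preparation) gives $g(x,y)=g(x,x^{k+1})+(y-x^{k+1})\,h(x,y)$ for some smooth $h$, so the map $g\mapsto g(x,x^{k+1})$ induces an isomorphism $\mathcal{E}/\langle f_0\rangle\cong C^\infty(\mathbb{R},0)$. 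Under this isomorphism $T$ descends to the principal ideal generated by $(k+1)\,u(x)\,x^k$, which, since $u$ is a unit, is just $\langle x^k\rangle\subset C^\infty(\mathbb{R},0)$. Taylor's theorem in one variable identifies $C^\infty(\mathbb{R},0)/\langle x^k\rangle$ with the $k$-dimensional space spanned by $1,x,\ldots,x^{k-1}$, and so these monomials form a basis of $\mathcal{E}/T$. This simultaneously recovers the codimension count and establishes transversality.

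The only genuinely non-elementary ingredient is the appeal to smooth division by $f_0$; once that is in hand the argument collapses to Taylor's theorem in one variable, with the units $1+x$ and $1\pm x^2$ being manifestly invertible near the origin. I therefore expect no real obstacle beyond the careful invocation of the preparation step.
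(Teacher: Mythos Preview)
Your proof is correct and follows essentially the same route as the paper: invoke Theorem~3.1 for the normal form and codimension, compute the tangent ideal $T=\langle f_0,\mathcal{L}_{X_a}f_0\rangle$, show that $1,x,\ldots,x^{k-1}$ span $\mathcal{E}/T$, and transport back via the linear isomorphism $h\mapsto h\cdot X_a$. The only cosmetic difference is that the paper simplifies the ideal directly as $\langle y-x^{k+1},\,u(x)x^k\rangle=\langle x^k,y\rangle$ (since $u$ is a unit and $x^{k+1}\in\langle x^k\rangle$), whereas you pass to the quotient $\mathcal{E}/\langle f_0\rangle\cong C^\infty(\mathbb{R},0)$ first; your division step is just Hadamard's lemma in the $y$-variable, so the appeal to Malgrange preparation is heavier machinery than you actually need.
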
  
\begin{proof}
We prove only the case where $a=a_D$. 

As we saw, in \ref{Theorem Darboux Martinet}, if the curve of fixed points of the vector field $X\in \ker(a)$ is regular and has order of contact $k$ with the $x$--axis, this field is orbitally $a$--equivalent with $(y-x^{k+1})\cdot X_a$. The tangent space of the equivalence class $\mathcal{O}_f$ at $f$, for $f(x,y)=y-x^{k+1}$, equals $\langle f,\mathcal{L}_{X_a}f\rangle =\langle x^k,y\rangle$, hence $codim ((y-x^k)\cdot X_a)=k$. The functions $1,..,x^{k-1}$ do not belong to $\langle x,y\rangle$ or to $\mathcal{O}_f$, hence the family of functions $y+\sum_{n=0}^{k-1}c_{n}x^n-x^{k+1}$ is transverse to $\mathcal{O}_f$ at $f$. The linear isomorphism $h\mapsto h\cdot X_a$ maps this family to the family curve of vector fields presented above. 
\end{proof}

\begin{figure}
\begin{center}
\includegraphics[scale=0.38]{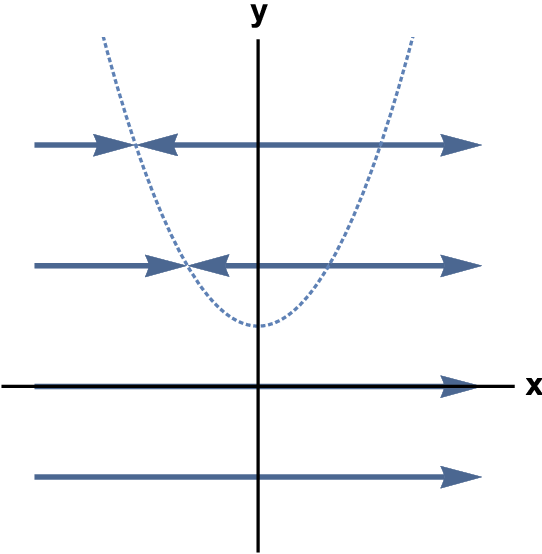}
\includegraphics[scale=0.38]{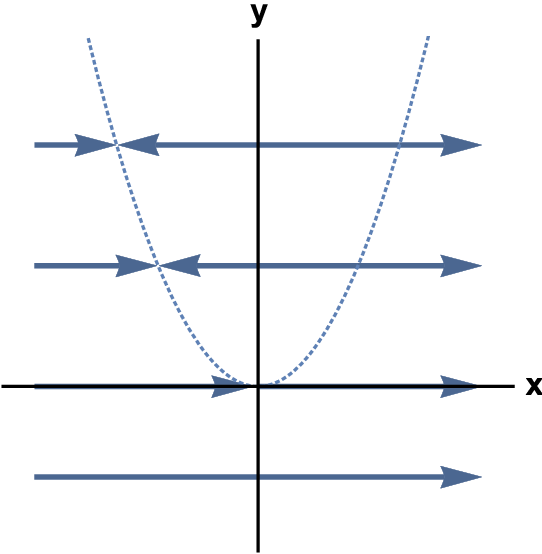}
\includegraphics[scale=0.38]{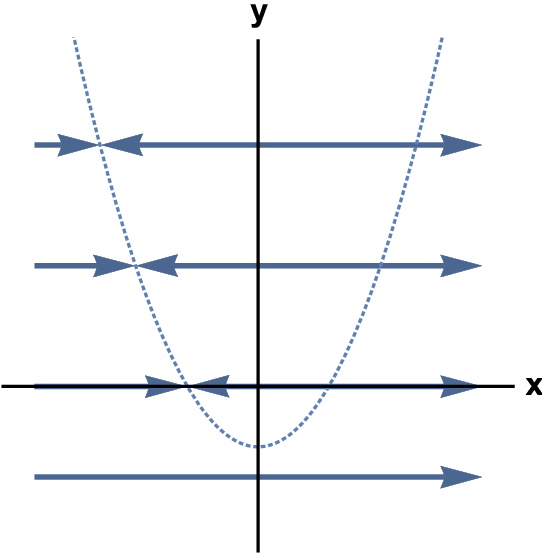}
\caption{Local bifurcation of codimension 1, for vector fields belonging in the kernel of the Darboux 1--form. The phase portrait of the vector field $(c+y-x^2)\cdot X_{a_D}$ is shown, for $c<0$ (left), $c=0$ (center) and $c>0$ (right). The dotted curve represents the curve of equilibria. Only a sufficient small neighbourhood of the origin is shown.}
\label{figure Darboux}
\end{center}
\end{figure}

In figure (\ref{figure Darboux}), we present the local bifurcation of codimension 1 family $(c+y-x^2)\cdot X_D$ undergoes. The case with the $(c+y-x^2)\cdot X_M$ is completely analogous, in a small neighbourhood of $(0,0)$, the difference being in the presence of an additional lines of singularities, since $X_D$ possesses a line of singularities at $x=-1$, while $X_M$ possesses no lines of singularities, in the case of positive sign and two lines of singularities, in the case of the negative sign.

\subsection{The 1--form of Liouville}

The Liouville form $a_L=xdy$ is of special importance, since its the primitive form of the standard volume form of the plane. This form, along with its corresponding vector field $X_{a_L}=-x\frac{\partial}{\partial x}$, are singular at the origin. Using the previous results, we can state the following:

\begin{theorem}\label{Theorem aL form}
Let $X=f\cdot X_{a_L}$.
\begin{enumerate}

\item {If $f(0,0)\neq 0$, $X\sim X_{a_L}$. Their equivalence class forms a set of codimension $0$ in $\ker(a_L)$.}

\item {If $f$ is regular at the origin and the curve $\{f=0\}$ is transversal to both coordinate axes there,   $X\sim (x+y)\cdot X_{a_L}$. This equivalence class is the unique class of codimension 1 in $\ker(a_L)$.}

\item {If $f(0,0)=0$ and of codimension $k$, $X\sim j^kf\cdot X_{a_L}$.}
\end{enumerate}
\end{theorem}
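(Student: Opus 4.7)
The plan is to reduce the three parts to Theorem~\ref{Theorem singular} (on singular $1$--forms) plus a direct calculation for the codimension--$1$ case, since $a_L=x\,dy$ is singular at the origin. Parts (1) and (3) are precisely the two bullets of Theorem~\ref{Theorem singular} applied with $a=a_L$; the only substantive content lies in part (2).

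For part (2), I would first pin down the conformal symmetries of $a_L$. Writing $\phi=(u,v)$, expanding $\phi^*(x\,dy)=u\,v_x\,dx+u\,v_y\,dy$, and imposing $\phi^*a_L=k\cdot a_L$ with $k$ nowhere zero force $v_x\equiv 0$ and $u$ to vanish along $\{x=0\}$. Hence $v=v(y)$ with $v'(0)\neq 0$ and $u=x\cdot w(x,y)$ with $w(0,0)\neq 0$. In particular, diagonal dilations $(x,y)\mapsto(cx,dy)$ with $c,d\neq 0$ are conformal symmetries of $a_L$. A direct computation at $f=x+y$ gives $\langle f,\mathcal{L}_{X_{a_L}}f\rangle=\langle x+y,-x\rangle=\mathfrak{m}$, so $\mathrm{codim}((x+y)\cdot X_{a_L})=\dim(\mathcal{E}/\mathfrak{m})=1$, confirming the codimension claim for the candidate normal form.

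For the classification step, take $f$ regular at the origin with $\{f=0\}$ transverse to both coordinate axes, and write $j^1 f=ax+by$ with $a,b\neq 0$. The same tangent--space computation, taken modulo $\mathfrak{m}^2$, shows that $f$ and $\mathcal{L}_{X_{a_L}}f$ span $\mathfrak{m}/\mathfrak{m}^2$ precisely when $a,b\neq 0$, so $f$ has finite codimension $1$. Theorem~\ref{Theorem singular}(2) then yields $f\sim j^1 f=ax+by$, and the diagonal symmetry $(x,y)\mapsto(x,(b/a)y)$ sends $x+y$ to $(1/a)(ax+by)$, proving $ax+by\sim x+y$. The Proposition relating $\mathcal{K}_a$--equivalence of functions to orbital $a$--conjugacy of vector fields then gives $X\sim(x+y)\cdot X_{a_L}$.

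For uniqueness, any representative $f$ of a codimension--$1$ orbit must be regular (otherwise $\langle f,X_{a_L}f\rangle\subseteq\mathfrak{m}^2$, forcing codimension $\geq 2$) and have $j^1 f=ax+by$ with both $a,b\neq 0$ (otherwise the mod--$\mathfrak{m}^2$ span is at most one--dimensional, again forcing codimension $\geq 2$); so every such $f$ falls within the case handled above. The step I expect to be the main obstacle is the explicit description of the conformal symmetries of $a_L$; once the form $\phi=(x\,w(x,y),v(y))$ is in hand, the remaining reductions are mechanical, since dilating in $y$ is exactly what is needed to absorb the slope $b/a$.
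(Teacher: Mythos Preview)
Your proof is correct, and for part~(2) it follows a genuinely different route from the paper's. The paper parametrises the curve as $\{y=x\,k(x)\}$ with $k(0)\neq 0$, computes the tangent ideal $\langle y-xk(x),\ \mathcal{L}_{X_{a_L}}(y-xk(x))\rangle$ by a chain of elementary manipulations until it visibly equals $\langle x,y\rangle$, and then invokes \cite[Theorem~4.1]{Zhitomirskii 1} as a black box to conclude that all regular codimension--$1$ germs are $\mathcal{K}_{a_L}$--equivalent to $x+y$. You instead (i) read off $\mathrm{codim}=1$ from the images of $f$ and $\mathcal{L}_{X_{a_L}}f$ in $\mathfrak{m}/\mathfrak{m}^2$ together with Nakayama, (ii) apply Theorem~\ref{Theorem singular}(2) (i.e.\ Zhitomirskii's finite--determinacy Theorem~A) to truncate $f$ to its $1$--jet $ax+by$, and (iii) normalise the linear part by the explicit conformal symmetry $(x,y)\mapsto(x,(b/a)y)$.

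Both arguments ultimately lean on \cite{Zhitomirskii 1}, but on different theorems: the paper consumes the classification statement (Theorem~4.1) directly, whereas you consume only the determinacy statement and then do the remaining normalisation by hand. Your route is more constructive---it exhibits the conjugating diffeomorphism on the $1$--jet level---and your treatment of uniqueness is actually more complete than the paper's: you explicitly rule out singular $f$ and the cases $a=0$ or $b=0$ by showing each forces $\mathrm{codim}\geq 2$, a point the paper leaves implicit. The only superfluous work in your write--up is the full description of the conformal symmetry group of $a_L$; since you use only the diagonal dilation, it would suffice to verify directly that $(x,y)\mapsto(x,(b/a)y)$ pulls $a_L$ back to a nonzero multiple of itself.
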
 
\begin{proof}
\begin{enumerate}\item[]
\item {As before, in this case $f\sim 1$.}

\item {Let us suppose that the curve is transversal to the $y$--axis. It is then of the form $\{y=g(x)\},\ g(0)=0$. The transversality of the curve to $x$--axis imposes the condition $g'(0)\neq 0$; thus $g(x)=x\cdot k(x),k(0)\neq 0$. We now calculate that:
\begin{equation*}
\begin{split}
\ & \big \langle y-x\cdot k(x),\ \mathcal{L}_{X_L}\big (y-x\cdot k(x)\big)\big \rangle = \\
&= \big \langle y-x\cdot k(x),\ x\cdot k(x)+x^2\cdot k'(x)\big \rangle =\\
&= \big \langle y-x\cdot k(x),\ y+x^2\cdot k'(x)\big \rangle=\\
&= \big \langle -x\cdot k(x)-x^2\cdot k'(x),\ y+x^2\cdot k'(x)\big \rangle=\\
&= \big \langle x\cdot \big (-k(x)-x\cdot k'(x)\big ),\ y+x^2\cdot k'(x)\big \rangle=\\
&= \big \langle x,\ y+x^2\cdot k'(x)\big \rangle=\\
&= \big \langle x,y \big \rangle.
\end{split}
\end{equation*}
The codimension, in this case, is therefore equal to 1. According to \cite[Theorem 4.1]{Zhitomirskii 1}, all functions which are regular at the origin and of codimension equal to 1 are $\mathcal{K}_{a_L}$--equivalent to the function $x+y$. Hence the conclusion.}

\item {This follows from \ref{Theorem singular}.}
\end{enumerate}
\end{proof}

We now describe the local bifurcation of codimension 1, for vector fields belonging in $\ker (a_L)$, omitting the proof. 

\begin{theorem}\label{Theorem bif Liouville}
Let $a_L=xdy$ be the form of Liouville. The set of vector fields belonging in $\ker(a_L)$, having a curve of fixed points regular at the origin and transversal to both coordinate axes, has codimension 1 in $\ker (a_L)$. All these vector fields are orbitally $a_L$--conjugate to the $(x+y) \cdot X_{a_L}$ vector field. The family of vector fields $(c+x+y)\cdot X_{a_L}$ intersects, for $c=0$, this equivalence class transversely.
\end{theorem}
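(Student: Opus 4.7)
The plan is to imitate the proof of Theorem \ref{Theorem bif Darboux Martinet}, substituting Theorem \ref{Theorem aL form} for the classification input. Because the map $h\mapsto h\cdot X_{a_L}$ is a linear isomorphism between $\mathcal{E}$ and $\ker(a_L)$, it suffices to establish the analogous statements for function germs under $\mathcal{K}_{a_L}$-equivalence and then transport them to vector fields.

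First, part (2) of Theorem \ref{Theorem aL form} already gives the identification: any $X=f\cdot X_{a_L}$ whose zero set is a regular curve through the origin transversal to both coordinate axes is orbitally $a_L$-conjugate to $(x+y)\cdot X_{a_L}$, so the class admits the single model advertised. Next, I would compute the tangent space to the orbit $\mathcal{O}_{f_0}$ at $f_0(x,y)=x+y$. Using $X_{a_L}=-x\,\partial/\partial x$, one finds $\mathcal{L}_{X_{a_L}}f_0=-x$, and hence
\[
\langle f_0,\mathcal{L}_{X_{a_L}}f_0\rangle=\langle x+y,-x\rangle=\langle x,y\rangle,
\]
the maximal ideal $\mathfrak{m}$. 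Therefore $\mathrm{codim}(f_0)=\dim(\mathcal{E}/\mathfrak{m})=1$, matching the claim and reconfirming uniqueness of the class.

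For the transversal unfolding, the constant germ $1$ does not lie in $\mathfrak{m}$ and thus spans a complement of $\langle f_0,\mathcal{L}_{X_{a_L}}f_0\rangle$ in $\mathcal{E}$. Consequently the one-parameter family $f_c(x,y)=c+x+y$ is transverse at $c=0$ to $\mathcal{O}_{f_0}$, and applying the linear isomorphism $h\mapsto h\cdot X_{a_L}$ yields the required transversal family $(c+x+y)\cdot X_{a_L}$. No genuine obstacle arises: the only computation is the identification $\langle x+y,-x\rangle=\mathfrak{m}$, which is immediate, and the transversality step reduces to the elementary fact that the constants surject onto $\mathcal{E}/\mathfrak{m}$. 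Thus the argument runs in complete parallel to the Darboux/Martinet case, with no new machinery needed.
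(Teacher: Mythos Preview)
Your proposal is correct and follows exactly the approach the paper intends: the paper omits the proof precisely because it parallels that of Theorem~\ref{Theorem bif Darboux Martinet}, and the tangent space computation $\langle x+y,\mathcal{L}_{X_{a_L}}(x+y)\rangle=\langle x,y\rangle$ has in fact already been carried out (in greater generality) in the proof of part~(2) of Theorem~\ref{Theorem aL form}. Nothing further is needed.
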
  

In figure \ref{figure Liouville}, the codimension 1 bifurcation is depicted.

\begin{figure}
\begin{center}
\includegraphics[scale=0.38]{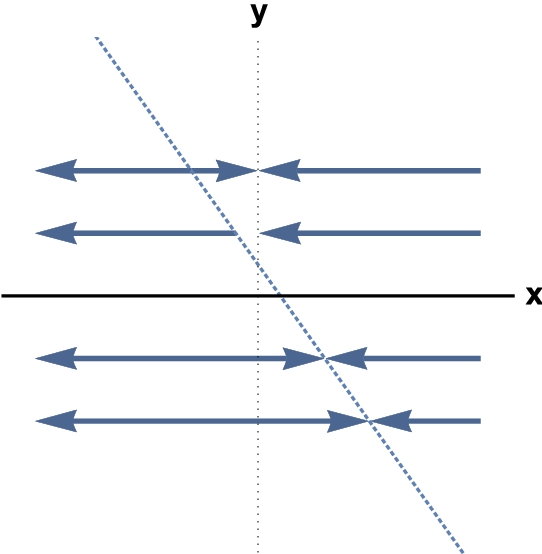}
\includegraphics[scale=0.38]{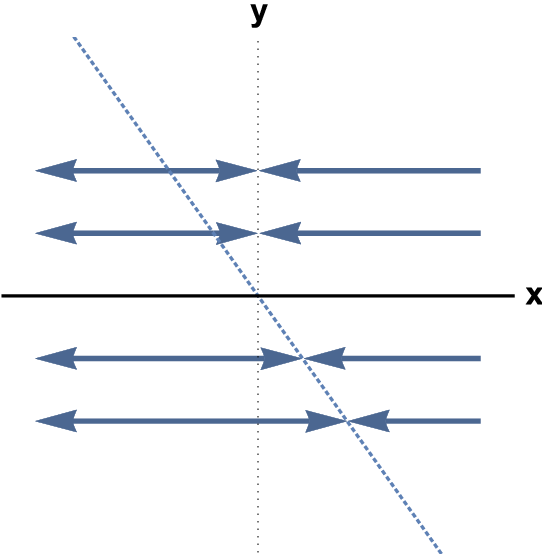}
\includegraphics[scale=0.38]{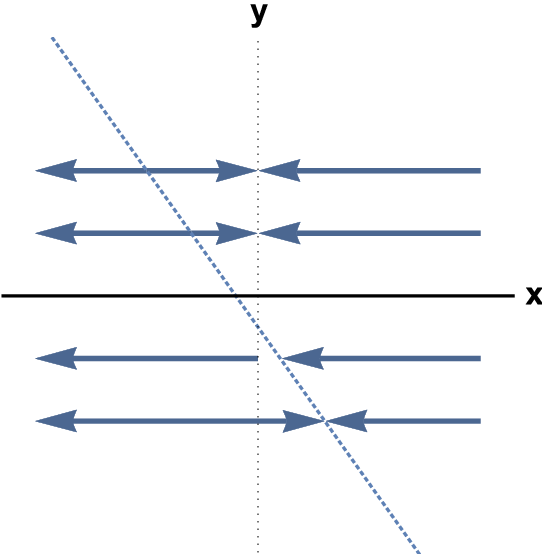}
\caption{Local bifurcation of codimension 1, for vector fields belonging in the kernel of the Liouville 1--form. The phase portrait of the vector field $(c+x+y)\cdot X_{a_L}$ is shown, for $c<0$ (left), $c=0$ (center) and $c>0$ (right). The dotted curves represents the curve of equilibria.}
\label{figure Liouville}
\end{center}
\end{figure}

\begin{remark}
Bifurcations of higher codimensions can be studied using the same approach, although one need to be more careful, since equivalence classes of codimension $k\geq 2$ need not be unique. As is easily verifiable, for example, both vector fields $(x-y^2)\cdot X_{a_L},\ (y-x^2)\cdot X_{a_L}$ are of codimension 2; however they are not orbitally $a_L$--conjugate.
\end{remark}


\section{The case of closed 1--forms}

The case where the 1--form of interest is closed is of special importance, since the kernel of the form consists of integrable vector fields. In other words, in this section we aim to give a complete description of the local models and bifurcations, of arbitrary codimension, of planar integrable vector fields.

We present in detail the case where the first integral is either regular at the origin (and therefore, local coordinates exist, in which the integral takes the simple form $I(x,y)=y$) and to the case where the integral is singular at the origin, but the singularity is of Morse type (it is thus equal, in local coordinates, to $x^2\pm y^2$. However, following the same lines, vector fields having an arbitrary integral can be studied, as we demonstrate with an example.
\subsection{The kernel of $a_1=dy$}

All 1--forms, that are regular at $0\in \mathbb{R}^2$, are equivalent to the form $a_1=dy$. The vector fields belonging in $\ker(a_1)$ are of the form $f\cdot X_{a_1}$, where $X_{a_1}=-\frac{\partial}{\partial x}$. These vector fields are, of course, integrable, the integral being the function $y$.

Since the 1--form $a_1$, and its corresponding vector field, are regular at the origin, one can state, using \ref{Theorem regular}, the following:

\begin{theorem}\label{Theorem a1 form}
Let $X\in \ker (a_1)$.
\begin{enumerate}
\item {If $X$ is regular, $X\sim X_{a_1}$. Their equivalence class forms a set of codimension $0$ in $\ker(a_M)$.}
\item {If the zeroes of $X$ form a regular curve passing through the origin, which curve has contact of order $k$ with the $x$--axis, $X\sim (y-x^{k+1})\cdot X_{a_1}$. This equivalence class is of codimension $k$.}
\item {If the curve of the singularities of $X$ is singular at the origin but simple, $X$ is orbitally $a_{1}$--conjugate with one of the following vector fields:
\[
X_1^k=(xy-x^k)\cdot X_{a_1},\ X_k^{k+1}=(x^2\pm y^{k+1})\cdot X_{a_1},\ X_2^4=(y^2+x^3)\cdot X_{a_1}.
\]
Here, $k\geq 2$, while their equivalence classes are of codimension $k,k+1,4$ respectively. If $k$ is even, the $``\pm"$ sign should chang to $``+"$.}
\end{enumerate}
\end{theorem}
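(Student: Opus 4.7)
The plan is to observe that this theorem is essentially a specialization of Theorem \ref{Theorem regular} to the case $a = a_1 = dy$, so the proof reduces to checking that $a_1$ satisfies the regularity hypothesis and that the associated vector field is indeed $X_{a_1} = -\partial/\partial x$. Since $(a_1)_0 = dy|_0 \neq 0$, the form $a_1$ is regular at the origin, putting us precisely in the setting of Theorem \ref{Theorem regular}.

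First I would verify the defining relation $X_{a_1} \lrcorner \omega = a_1$ with respect to the standard volume form $\omega = dx \wedge dy$: for $X = \alpha \partial_x + \beta \partial_y$ we have $X \lrcorner \omega = \alpha\, dy - \beta\, dx$, so the equation $X \lrcorner \omega = dy$ forces $\alpha = 1$, $\beta = 0$ (up to the sign convention used elsewhere in the paper, which gives $X_{a_1} = -\partial/\partial x$). With $X_{a_1}$ identified, every $X \in \ker(a_1)$ takes the form $f\cdot X_{a_1}$ for some $f \in \mathcal{E}$, and orbital $a_1$-conjugacy of such fields is controlled by $\mathcal{K}_{a_1}$-equivalence of the functions $f$ by Proposition~2.4.

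Then each of the three cases of the statement matches the corresponding item of Theorem \ref{Theorem regular} verbatim. In (1), regularity of $X$ means $f(0,0) \neq 0$, so $f \sim 1$ and $X \sim X_{a_1}$, with codimension $0$. In (2), the hypothesis that $\{f=0\}$ is a regular curve of contact order $k$ with the $x$-axis puts us exactly in case (2) of Theorem \ref{Theorem regular}, giving $X \sim (y-x^{k+1}) \cdot X_{a_1}$ with codimension $k$. In (3), the simple-singular hypothesis on $\{f=0\}$ invokes case (3), producing the three normal forms $X_1^k$, $X_k^{k+1}$, $X_2^4$ with the stated codimensions, and the parity caveat for $k$ even is inherited from the Remark following Theorem \ref{Theorem regular}.

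There is no substantive obstacle here—the result is a transcription of Theorem \ref{Theorem regular} into the special case $a = a_1 = dy$. The only bookkeeping to be careful about is confirming the sign/normalisation of $X_{a_1}$ (which only affects the overall sign of the representative vector fields, not their orbital $a_1$-conjugacy classes) and explicitly invoking the parity remark in item (3); no new computation or use of Zhitomirskii's results beyond what is already cited in the proof of Theorem \ref{Theorem regular} is required.
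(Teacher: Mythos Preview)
Your proposal is correct and follows exactly the paper's approach: the paper does not give a separate proof for this theorem but simply states that, since $a_1$ and $X_{a_1}$ are regular at the origin, the result follows directly from Theorem~\ref{Theorem regular}. Your added verifications (that $a_1(0)\neq 0$, the identification of $X_{a_1}$, and the explicit matching of each case) are just an elaboration of that one-line reduction and introduce nothing new or different.
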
 

\begin{remark}
One should compare our results with \cite[Theorem 3.1]{Oliveira Tari}. The function $y+xy-x^3$, presented there, is $\mathcal{K}_{a_1}$--equivalent to the function $y-x^3$, presented above. Our results are also connected with the classification of pairs of plane hamiltonian vector fields, see \cite[Theorem A]{Oliveira}.
\end{remark}

We now present local bifurcations of codimension k, for vector fields with function $y$ as a first integral.

\begin{theorem}
Let $X=f\cdot X_{a_1}$ be a vector field of the plane, having the function $I(x,y)=y$ as a first integral. If the curve of its singularities $\{f=0\}$ is regular at the origin and of order of contact $k$ with the $x$--axis, the vector field is orbitally conjugate, via a diffeomorphism preserving the integral $I$, to the vector field $-(y-x^{k+1})\cdot X_{a_1}$. This equivalence class is of codimension $k$ in $\ker(a)$. The family of vector fields:
\[
(y+\sum_{n=0}^{k-1}c_{n}x^n-x^{k+1})\cdot X_{a_1}
\] intersects, for $c_i=0,\ i=0,..,k-1$, this equivalence class transversely.
\end{theorem}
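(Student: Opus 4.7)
The plan is to follow the strategy of the proof of Theorem \ref{Theorem bif Darboux Martinet}, using the linear isomorphism $h \mapsto h \cdot X_{a_1}$ to transfer the problem to one about function-germs under $\mathcal{K}_{a_1}$-equivalence, and then combining Theorem \ref{Theorem a1 form} with a direct computation of the tangent space to the relevant orbit.

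First, I would invoke part (2) of Theorem \ref{Theorem a1 form} to reduce $X = f \cdot X_{a_1}$ to the normal form $(y - x^{k+1}) \cdot X_{a_1}$. The conjugating diffeomorphism $\phi$, being a conformal symmetry of $a_1 = dy$, satisfies $dy = k \cdot d(y \circ \phi)$ for some non-vanishing $k$; hence $y \circ \phi$ depends only on $y$, so $\phi$ preserves the foliation by level curves of $I$, justifying the phrase ``preserving the integral''.

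Next, to establish the codimension, I would compute the tangent space at $f_0(x,y) = y - x^{k+1}$ to its $\mathcal{K}_{a_1}$-orbit. Since $X_{a_1} = -\partial/\partial x$, we have $\mathcal{L}_{X_{a_1}} f_0 = (k+1)x^k$, so
\[
\langle f_0,\ \mathcal{L}_{X_{a_1}} f_0\rangle \;=\; \langle y - x^{k+1},\ x^k\rangle \;=\; \langle y,\ x^k\rangle,
\]
where the last step uses $x^{k+1} \in \langle x^k \rangle$. Consequently $\mathrm{codim}(f_0 \cdot X_{a_1}) = \dim_{\mathbb{R}} \mathcal{E}/\langle y, x^k\rangle = k$, with $\{1,x,\ldots,x^{k-1}\}$ a natural real complement.

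For transversality of the family $F_c(x,y) = y + \sum_{n=0}^{k-1} c_n x^n - x^{k+1}$ at $c = 0$, the partials $\partial F_c/\partial c_n \big|_{c=0} = x^n$ span exactly this complement; transporting back via $h \mapsto h\cdot X_{a_1}$ then yields the asserted transversal unfolding inside $\ker(a_1)$. I do not expect a serious obstacle, since the argument parallels that of Theorem \ref{Theorem bif Darboux Martinet} almost verbatim; the only subtle point is the reconciliation of the phrase ``preserving the integral'' with orbital $a_1$-conjugacy, which is handled above.
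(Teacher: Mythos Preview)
Your proposal is correct and follows essentially the same approach as the paper: the paper explicitly omits this proof, stating that it ``follows the lines of the proof of \ref{Theorem bif Darboux Martinet}'', and your argument reproduces that template (invoke the normal form from \ref{Theorem a1 form}(2), compute $\langle f_0,\mathcal{L}_{X_{a_1}}f_0\rangle=\langle y,x^k\rangle$, read off the complement $\{1,x,\dots,x^{k-1}\}$, and transport via $h\mapsto h\cdot X_{a_1}$). Your extra remark reconciling ``diffeomorphism preserving the integral $I$'' with conformal symmetry of $a_1=dy$ is a point the paper leaves implicit, so it only adds clarity.
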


We omit the proof of this theorem, since it follows the lines of the proof of \ref{Theorem bif Darboux Martinet}. The phase portrait of codimension 1 bifurcation is almost identical with the one presented in figure \ref{figure Darboux}, the only difference being the extra line of fixed points, in the Darboux case, which is away from the origin. 

\subsection{The kernel of $a_2^+=xdx+ydy$}

The form $a_2^+=xdx+ydy$ is singular at the origin and so is its corresponding vector field $X_{a_2^+}=-y\frac{\partial}{\partial x}+x\frac{\partial}{\partial y}$. All the vector fields belonging in $\ker(a_2^+)$ share the first integral $x^2+y^2$ and have, necessarily, one zero at the origin. Their classification is given in the following:

\begin{theorem}\label{Theorem a2+ form}
Let $X=f\cdot X_{a_2^+}$.
\begin{enumerate}

\item {If $X$ possesses only one zero, it is orbitally $a_{2}^+$--conjugate to $X_{a_2^+}$. Their equivalence class forms a set of codimension $0$ in $\ker(a_2^+)$.}

\item {If $f\cdot X_{a_2^+}$ is such that the curve $\{f=0\}$ is regular at the origin, the vector field is orbitally $a_2^+$--conjugate to $-xy\frac{\partial}{\partial x}+x^2\frac{\partial}{\partial y}$.  This equivalence class is of codimension $1$.}

\item {If the curve $\{f=0\}$ of the singularities of $X$ is of finite codimension $k$, $f\sim j^kf$ and, therefore, $X\sim j^kf\cdot X_{a_2^+}$.}

\end{enumerate}
\end{theorem}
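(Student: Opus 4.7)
The plan is to reduce each of the three parts to Theorem \ref{Theorem singular} applied with $a=a_2^+$, using that $a_2^+=\tfrac{1}{2}d(x^2+y^2)$ is singular at the origin and that every rotation $R_\theta$ acts as a (genuine) symmetry of $a_2^+$.

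For part (1), I would first observe that $X_{a_2^+}$ itself vanishes only at the origin. Consequently, $X=f\cdot X_{a_2^+}$ has a unique zero if and only if $f(0,0)\neq 0$. In this case part 1 of Theorem \ref{Theorem singular} gives $f\sim 1$, and hence $X\sim X_{a_2^+}$.

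For part (2), I would first verify that the hypothesis ``$\{f=0\}$ is a regular curve through the origin'' forces $f$ to have codimension exactly $1$. Since $f(0,0)=0$ and $d_0 f\neq 0$, we have $j^1f=ax+by$ with $(a,b)\neq (0,0)$, and a short direct computation yields
\[
\langle ax+by,\ \mathcal{L}_{X_{a_2^+}}(ax+by)\rangle=\langle ax+by,\ bx-ay\rangle=\langle x,y\rangle,
\]
so that $\operatorname{codim}(f)=1$. Part 2 of Theorem \ref{Theorem singular} then gives $f\sim j^1 f=ax+by$. To eliminate the free parameters $a,b$, I would invoke a rotation $R_\theta$: since $x^2+y^2$ is rotation invariant, $R_\theta^*a_2^+=a_2^+$, so $R_\theta$ is in particular a conformal symmetry of $a_2^+$. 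Choosing $\theta$ with $(\cos\theta,\sin\theta)=(a,b)/\sqrt{a^2+b^2}$ one finds $R_\theta^*x=(ax+by)/\sqrt{a^2+b^2}$, so $ax+by\sim x$ under $\mathcal{K}_{a_2^+}$. Hence $f\sim x$ and $X\sim x\cdot X_{a_2^+}=-xy\,\partial_x+x^2\,\partial_y$.

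Part (3) is a direct application of part 2 of Theorem \ref{Theorem singular} with $a=a_2^+$. The only non-routine point is Part (2): the rotation step is immediate thanks to the $SO(2)$-invariance of $a_2^+$, and the codimension computation is a one-line check, but both must be carried out explicitly because Theorem \ref{Theorem singular} only furnishes the $k$-jet normal form and does not, on its own, identify the $\mathcal{K}_{a_2^+}$-equivalence class of that jet.
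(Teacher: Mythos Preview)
Your argument is correct. Parts (1) and (3) coincide with the paper's proof. For part (2), however, you take a genuinely different route.

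The paper establishes $\operatorname{codim}(f)=1$ by a geometric argument: it observes that $\operatorname{codim}(f)\geq 2$ would force the restriction of $\mathcal{L}_{X_{a_2^+}}f$ to the regular curve $\{f=0\}$ to have vanishing $1$--jet, which in turn would force $X_{a_2^+}$ to be tangent to that curve; but no regular curve through the origin can be an integral curve of the rotation field $X_{a_2^+}$. Having obtained codimension~$1$, the paper then invokes an external result (\cite[Theorem 4.1]{Zhitomirskii 1}) stating that all regular curves of codimension~$1$ are equivalent under foliation--preserving diffeomorphisms, which yields the normal form directly.

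You instead compute $\operatorname{codim}(f)=1$ by an explicit ideal calculation on the $1$--jet, then apply Theorem~\ref{Theorem singular} (already available in the paper) to reduce $f$ to $j^1f=ax+by$, and finally exploit the $SO(2)$--symmetry of $a_2^+$ to rotate $ax+by$ to a multiple of $x$. This is more self--contained: it avoids the citation of \cite[Theorem 4.1]{Zhitomirskii 1} and replaces it by the evident rotational invariance of $x^2+y^2$. On the other hand, the paper's geometric tangency argument and its appeal to \cite{Zhitomirskii 1} generalise to foliations without such a large symmetry group, whereas your rotation trick is specific to $a_2^+$. One small point worth making explicit in your write--up: the displayed ideal computation is for $j^1f$, not for $f$ itself; you should note that since $f$ and $\mathcal{L}_{X_{a_2^+}}f$ have linearly independent $1$--jets (the determinant being $-a^2-b^2\neq 0$), they generate the maximal ideal $\langle x,y\rangle$, whence $\operatorname{codim}(f)=1$ follows.
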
 
\begin{proof}
\begin{enumerate}\item[]

\item {In this case $f(0,0)\neq 0$ and thus $f\sim 1$.}

\item {Let $\{f=0\}$ be non--singular at the origin. The relation:
\[
codim (\mathcal{E}/\langle f,\mathcal{L}_{X_{a_2^+}} f\rangle)\geq 2
\]
holds if, and only if, the restriction of $\mathcal{L}_{X_{a_2^+}} f$ on $\{f=0\}$ has zero 1--jet. Since:
\begin{equation*}
\begin{split}
j^1_{(x_0,y_0)}(\mathcal{L}_{X_{a_2^+}}f) & =-f_x(x_0,y_0)\cdot y+f_y(x_0,y_0)\cdot x= \\
& =(-y,x)\cdot (f_x(x_0,y_0),f_y(x_0,y_0)),
\end{split}
\end{equation*}  
we conclude that, at every point of curve $\{f=0\}$, the vector $X_{a_2^+}$ should be tangent to this curve; such a curve, however, cannot be regular.  

Therefore, the codimension of $f$ is 1. According to \cite[Theorem 4.1]{Zhitomirskii 1}, all regular curves of codimension 1 are equivalent, via a diffeomorphism preserving the foliation defined by $X_{a_2^+}$; hence the conclusion.}

\item {This follows from \ref{Theorem singular}.}
\end{enumerate}
\end{proof}

The bifurcation of codimension 1 is described in the following:

\begin{theorem}
Let $X=f\cdot X_{a_2^+}$ be a vector field of the plane, having the function $I(x,y)=x^2+y^2$ as a first integral. If the curve of its singularities $\{f=0\}$ is regular at the origin, the vector field is orbitally conjugate, via a diffeomorphism preserving the integral $I$, to the vector field $x\cdot X_{a_2^+}$. This equivalence class is of codimension $1$ in $\ker(a)$. The family of vector fields:
\[
(c+x)\cdot X_{a_2^+}
\]
intersects, for $c=0$, this equivalence class transversely.
\end{theorem}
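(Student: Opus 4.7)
The plan is to follow exactly the template used for \ref{Theorem bif Darboux Martinet} and \ref{Theorem bif Liouville}: first apply the classification result \ref{Theorem a2+ form}(2) to reduce any $X=f\cdot X_{a_2^+}$ with $\{f=0\}$ regular at the origin to the normal form $f=x$, then compute the tangent space $\langle f, \mathcal{L}_{X_{a_2^+}} f\rangle$ at this normal form to confirm that the codimension is $1$, and finally verify transversality of the proposed unfolding by exhibiting the constant $1$ as a complement to that tangent space. The linear isomorphism $h\mapsto h\cdot X_{a_2^+}$ then transfers the transversality statement from functions to vector fields.

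For the codimension computation, observe that $X_{a_2^+}=-y\,\partial_x+x\,\partial_y$, so
\[
\mathcal{L}_{X_{a_2^+}}(x)= -y\cdot 1 + x\cdot 0 = -y.
\]
Hence at $f=x$ we find
\[
\langle f,\mathcal{L}_{X_{a_2^+}}f\rangle = \langle x,-y\rangle = \langle x,y\rangle = \mathfrak{m},
\]
the maximal ideal of $\mathcal{E}$, and $\dim_{\mathbb{R}}(\mathcal{E}/\mathfrak{m})=1$, confirming the codimension.

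For transversality, the family $f_c=c+x$ contributes, at $c=0$, the direction $\partial_c f_c|_{c=0}=1$. Since $1\notin\mathfrak{m}$, it projects to a generator of the one-dimensional quotient $\mathcal{E}/\langle f,\mathcal{L}_{X_{a_2^+}}f\rangle$, so the curve $c\mapsto f_c$ meets $\mathcal{O}_f$ transversely at $f=x$. Applying the linear isomorphism $h\mapsto h\cdot X_{a_2^+}$ yields the desired transversality of the family $(c+x)\cdot X_{a_2^+}$ to the corresponding equivalence class of vector fields.

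The only minor subtlety, as opposed to a true obstacle, is the phrase ``diffeomorphism preserving the integral $I$''. In the closed setting $a_2^+=d\!\left(\tfrac{x^2+y^2}{2}\right)$, the relation $a_2^+=k\cdot\phi^*a_2^+$ forces $I\circ\phi=g(I)$ for some $g$ with $g'(0)\neq 0$, so such a $\phi$ preserves the foliation of level sets of $I$; this is the sense in which $I$ is preserved, consistent with the usage in the analogous statement for $a_1=dy$. No further work is required beyond what is already provided by \ref{Theorem a2+ form}, whose proof supplies precisely a diffeomorphism of this type.
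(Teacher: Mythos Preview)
Your proof is correct and follows precisely the template the paper establishes in the proof of Theorem~\ref{Theorem bif Darboux Martinet}; indeed the paper does not write out a separate proof for this statement, relying on that same pattern. Your explicit computation $\langle x,\mathcal{L}_{X_{a_2^+}}x\rangle=\langle x,y\rangle$ and the transversality check via $1\notin\mathfrak{m}$ are exactly what is required, and your remark on the meaning of ``preserving $I$'' in the conformal setting is apt.
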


This bifurcation is presented in figure (\ref{figure a2+}).

\begin{figure}
\begin{center}
\includegraphics[scale=0.38]{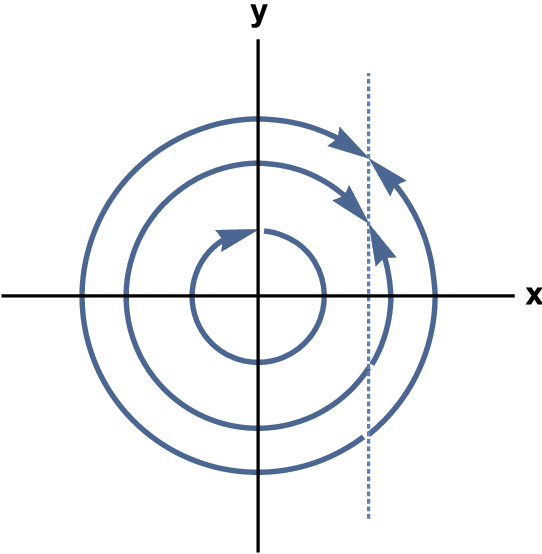}
\includegraphics[scale=0.38]{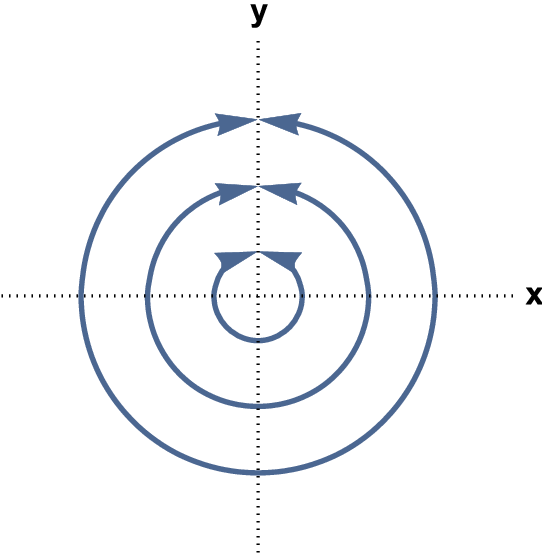}
\includegraphics[scale=0.38]{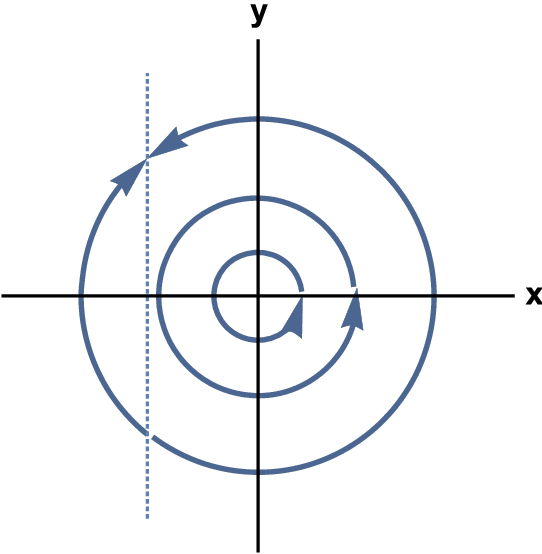}
\caption{Local bifurcation of codimension 1, for vector fields belonging in the kernel of the $a_2^+$--form. The phase portrait of the vector field $(c+x+y)\cdot X_{a_2^+}$ is shown, for $c<0$ (left), $c=0$ (center) and $c>0$ (right). The dotted curves represents the curve of equilibria.}
\label{figure a2+}
\end{center}
\end{figure}

\subsection{The kernel of $a_2^-=xdx-ydy$}

The form $a_2^-=xdx-ydy$ is singular at the origin and so is its corresponding vector field $X_{a_2^-}=y\frac{\partial}{\partial x}+x\frac{\partial}{\partial y}$. All the vector fields belonging in $\ker(a_2^-)$ have the function $x^2-y^2$ as a first integral. Their orbital $a_2^-$--conjugacy classes are given in the following:

\begin{theorem}\label{Theorem a2- form}
Let $X=f\cdot X_{a_2^-}$.
\begin{enumerate}

\item {If $X$ possesses only one zero, it is orbitally $a_{2}^-$--conjugate to $X_{a_2^-}$. Their equivalence class forms a set of codimension $0$ in $\ker(a_2^-)$.}

\item {If $f\cdot X_{a_2^-}$ is such that the curve $\{f=0\}$ is regular at the origin and of codimension $1$, the vector field is orbitally $a_2^-$--conjugate to $x\cdot X_{a_2^-}$.}

\item {If the curve $\{f=0\}$ of the singularities of $X$ is of finite codimension $k$, $X\sim j^kf \cdot X_{a_2^-}$.}

\end{enumerate}
\end{theorem}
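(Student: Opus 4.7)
My plan is to mirror the structure of the proof of Theorem~\ref{Theorem a2+ form}, while being alert to a genuine difference between the two cases: unlike $X_{a_2^+}$, the vector field $X_{a_2^-}=y\partial_x+x\partial_y$ has two real integral curves $y=\pm x$ passing through the origin which are themselves regular, so the ``no regular curve can be tangent to the foliation'' shortcut used there cannot be copied verbatim.

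Part 1 is immediate: if $f(0,0)\neq 0$ then $f$ is a unit and $f\sim 1$ via $\phi=\mathrm{Id}$, $g=f^{-1}$ in the definition of $\mathcal{K}_{a_2^-}$-equivalence. Part 3 is immediate from Theorem~\ref{Theorem singular}, since $a_2^-$ is singular at the origin and the hypothesis matches its second clause. The substantive content is part 2.

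For part 2, I would write $f=ax+by+O(2)$ with $(a,b)\neq(0,0)$ (regularity of $\{f=0\}$), and compute
\[
\mathcal{L}_{X_{a_2^-}}f \;=\; yf_x + xf_y \;=\; bx + ay + O(2).
\]
The linear parts $ax+by$ and $bx+ay$ are linearly independent precisely when $a^2\neq b^2$, i.e.\ exactly when the tangent of $\{f=0\}$ at the origin is distinct from both integral lines $y=\pm x$ of $X_{a_2^-}$. By Nakayama's lemma, this independence is equivalent to $\langle f,\mathcal{L}_{X_{a_2^-}}f\rangle$ being the maximal ideal of $\mathcal{E}$, which is codimension exactly $1$; if instead $a^2=b^2$ the two linear parts are proportional, so the ideal fails to contain the maximal ideal at the linear level and the codimension is at least $2$. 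The hypothesis ``regular and of codimension $1$'' therefore forces the tangent of $\{f=0\}$ at $0$ to be transverse to both integral lines, and I would then invoke \cite[Theorem 4.1]{Zhitomirskii 1} to conclude that all such curves lie in a single $\mathcal{K}_{a_2^-}$-equivalence class; since $f=x$ defines such a curve, the class is precisely the orbit of $x\cdot X_{a_2^-}$.

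The main obstacle is exactly the step above: because $X_{a_2^-}$ is hyperbolic (linearisation with eigenvalues $\pm 1$) rather than elliptic, it does admit regular integral curves through the origin, so the clean tangency argument used in the proof of Theorem~\ref{Theorem a2+ form} is not available here. It must be replaced with the explicit $1$-jet calculation above, which isolates the codimension $1$ stratum by excluding the two exceptional tangent directions, after which Zhitomirskii's classification collapses the stratum to the single normal form $f=x$.
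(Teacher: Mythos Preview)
Your proof is correct, but you take a different route from the paper for part~2. The paper first changes coordinates: it observes that the linear map $\phi(x,y)=(x+y,x-y)$ satisfies $X_{a_2^-}=\phi^*Y$ with $Y=x\,\partial_x-y\,\partial_y$, applies \cite[Theorem~4.1]{Zhitomirskii 1} in the diagonalised coordinates to get the normal form $x+y$ for the curve, and then pulls this back via $\phi$ to obtain $2x\sim x$. You instead work directly in the original coordinates, computing the $1$-jets of $f$ and $\mathcal{L}_{X_{a_2^-}}f$ and using Nakayama to identify the codimension-$1$ stratum as precisely those regular curves transverse to both separatrices $y=\pm x$, before invoking Zhitomirskii's result directly. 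Your approach has the virtue of making the geometric content of ``codimension~$1$'' explicit---which is exactly what the paper needs (and re-derives) in the proof of the subsequent bifurcation theorem---while the paper's coordinate change is slightly more economical since it sidesteps the jet computation entirely. Both are valid and neither is strictly shorter once the downstream use is taken into account.
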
 
\begin{proof}
\begin{enumerate}\item[]

\item {In this case $f(0,0)\neq 0$ and thus $f\sim 1$.}

\item {There exist local coordinates in which the vector field takes the form $Y=x\frac{\partial}{\partial x}-y\frac{\partial}{\partial y}$. In those coordinates, the curve $\{f=0\}$ is equivalent, via a diffeomorphism preserving the foliation defined by $Y$, to the function $x+y$ (see \cite[Theorem 4.1]{Zhitomirskii 1}). Define $\phi(x,y)=(x+y,x-y)$. This mapping is a local diffeomorphism which satisfies the equation $X_{a_2^-}=\phi^*Y$. Since $\phi^*(x+y)=2x$, we arrive at our conclusion.}

\item {This follows from \ref{Theorem singular}. According to this theorem, and using our notation, $f\sim j^kf$ and thus $X\sim j^kX_{a_2^-}$.}
\end{enumerate}
\end{proof}

\begin{theorem}
Let $X=f\cdot X_{a_2^-}$ be a vector field of the plane, having the function $I(x,y)=x^2-y^2$ as a first integral. If the curve of its singularities $\{f=0\}$ is regular at the origin and of order of contact 1 with either one of the lines $\{x=y\},\ \{x=-y\}$, the vector field is orbitally conjugate, via a diffeomorphism preserving the integral $I$, to the vector field $x\cdot X_{a_2^-}$. This equivalence class is of codimension $1$ in $\ker(a)$. The family of vector fields:
\[
(c+x)\cdot X_{a_2^-}
\]
intersects, for $c=0$, this equivalence class transversely.
\end{theorem}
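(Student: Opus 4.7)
The plan is to mirror the argument of \ref{Theorem bif Darboux Martinet}: apply \ref{Theorem a2- form}(2) to identify the normal form $x\cdot X_{a_2^-}$, then compute the tangent space at $f_0=x$ to check both the codimension and the transversality of the unfolding $(c+x)\cdot X_{a_2^-}$.

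First I would translate the hypothesis ``$\{f=0\}$ regular and of order of contact $1$ with $\{x=y\}$ or $\{x=-y\}$'' into the algebraic condition needed by \ref{Theorem a2- form}(2), namely $\mathcal{K}_{a_2^-}$-codimension equal to $1$. Writing $f=\alpha x+\beta y+O(2)$, a direct computation gives $\mathcal{L}_{X_{a_2^-}} f = y f_x + x f_y = \beta x+\alpha y+O(2)$; the determinant of the associated $2\times 2$ linear system is $\alpha^2-\beta^2$, which is nonzero precisely when $\{f=0\}$ meets each of the invariant lines $\{x=\pm y\}$ transversely, i.e.\ with intersection multiplicity $1$, which I take to be the content of the stated order-of-contact hypothesis. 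In that case the linear parts of $f$ and $\mathcal{L}_{X_{a_2^-}} f$ already span $\mathfrak{m}=\langle x,y\rangle$; a standard Nakayama argument then yields $\langle f,\mathcal{L}_{X_{a_2^-}} f\rangle\supseteq \mathfrak{m}$, and hence $\mathrm{codim}(f)=1$. By \ref{Theorem a2- form}(2), this gives $X\sim x\cdot X_{a_2^-}$, and since the conjugacy there comes from a diffeomorphism preserving the foliation of $X_{a_2^-}$ (equivalently, the foliation by level sets of $I=x^2-y^2$), it is of the integral-preserving type claimed.

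Second I would exhibit the transversal unfolding. At $f_0=x$ the tangent space to the equivalence class is $\langle f_0,\mathcal{L}_{X_{a_2^-}} f_0\rangle=\langle x,y\rangle=\mathfrak{m}$, and the family $f_c=c+x$ satisfies $\partial_c f_c|_{c=0}=1\notin\mathfrak{m}$; hence $c\mapsto f_c$ crosses $\mathcal{O}_{f_0}$ transversely at $c=0$. Pushing forward through the linear isomorphism $h\mapsto h\cdot X_{a_2^-}$ then turns this into the transversal family $(c+x)\cdot X_{a_2^-}$ in $\ker(a_2^-)$, as required.

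The main obstacle is the first step: matching the geometric ``order of contact $1$'' hypothesis with the algebraic non-degeneracy $\alpha^2\neq\beta^2$. Unlike the $a_2^+$ setting, where every regular singular locus at the origin automatically has codimension $1$ because $X_{a_2^+}$ admits no real invariant curve through the origin, here $X_{a_2^-}$ carries two invariant lines $\{x=\pm y\}$; a regular curve tangent to either of them lies ``too close'' to being invariant and has $\mathcal{K}_{a_2^-}$-codimension strictly greater than $1$. Pinning down that the stated contact condition is exactly what excludes such tangencies is the one step not simply inherited from the Darboux/Martinet proof.
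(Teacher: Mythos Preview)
Your argument is correct and matches the paper's in substance; the only stylistic difference is that the paper first passes to the diagonal coordinates $\phi(x,y)=(x+y,x-y)$ in which $X_{a_2^-}$ becomes $Y=x\,\partial_x-y\,\partial_y$, reads off there that codimension~$1$ is equivalent to transversality with both coordinate axes, and then pulls this back to the lines $\{x=\pm y\}$, whereas you carry out the same linear-algebra test $\alpha^2\neq\beta^2$ directly in the original coordinates. Your identification of the contact hypothesis with this nondegeneracy, the Nakayama step giving $\langle f,\mathcal{L}_{X_{a_2^-}}f\rangle=\mathfrak m$, and the transversality check for $(c+x)$ at $f_0=x$ are exactly what the paper leaves ``to the reader''.
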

\begin{proof}
As in the proof of the second statement of \ref{Theorem a2- form}, take local coordinates in which $X_{a_2^-}$ takes the form $Y=x\frac{\partial}{\partial x}-y\frac{\partial}{\partial y}$. It is easy to show that, in these coordinates, a curve which is regular at the origin has codimension equal to 1 if, and only if, it has contact of order 1 with either one of the coordinate axes, while the diffeomorphism $\phi$ transfomrs these two axes to the lines mentioned in the statement. We leave the rest of the proof to the reader. 
\end{proof}

We depict this bifurcation in figure (\ref{figure a2-}).

\begin{figure}
\begin{center}
\includegraphics[scale=0.38]{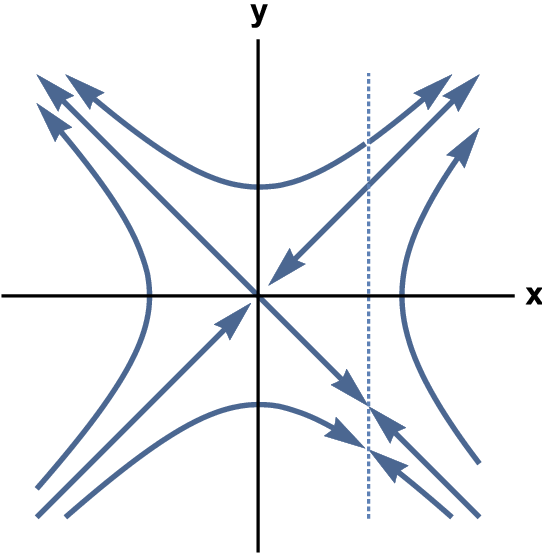}
\includegraphics[scale=0.38]{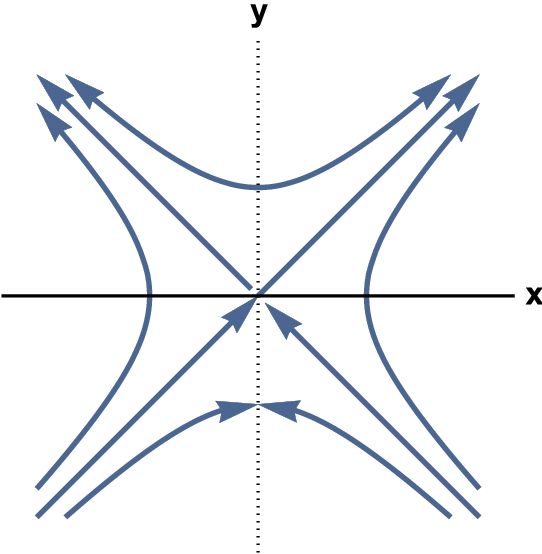}
\includegraphics[scale=0.38]{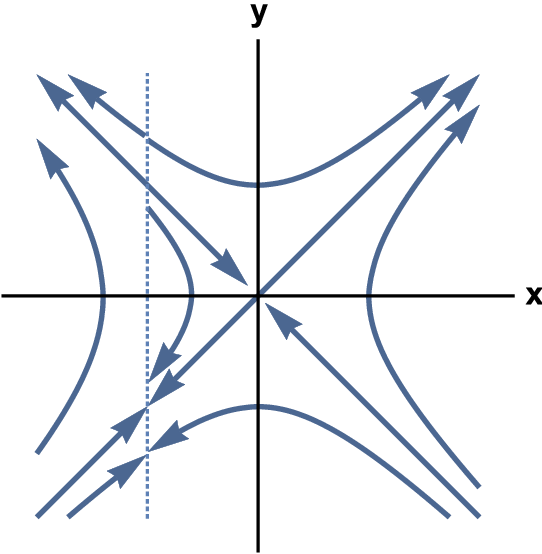}
\caption{Local bifurcation of codimension 1, for vector fields belonging in the kernel of the 1--form $a_2^-$. The phase portrait of the vector field $(c+x+y)\cdot X_{a_2^-}$ is shown, for $c<0$ (left), $c=0$ (center) and $c>0$ (right). The dotted curve represents the curve of equilibria.}
\label{figure a2-}
\end{center}
\end{figure}

\subsection{The kernel of $a_g=dg$}

In the same spirit, one can study the vector fields belonging in the kernel of every 1--form $a_k$, where $a_k=dg$, $g$ being a function of the plane. Let us give an example.

\begin{example}
Let $g(x,y)=\frac{1}{3}x^3-\frac{1}{2}x^2+\frac{1}{2}y^2$. We wish to study bifurcations of vector fields, having $g$ as a first integral. 

Here, $dg=(x^2-x)dx+ydy$, with associated vector field $X_g=-y\frac{\partial}{\partial x}+(x^2-x)\frac{\partial}{\partial y}$. This vector field is singular at the origin and all vector fields, having $g$ as a first integral, are of the form $f\cdot X_g$. Invoking \ref{Theorem singular}, one can deduce the following:
\begin{enumerate}
\item {A vector field $f\cdot X_g$, with $f(0,0)\neq 0$, is orbitally conjugate to $X_g$, via a diffeomorphism preserving the first integral.

For example, the vector field $-e^{x^2}y\frac{\partial}{\partial x}+(x^2-x)e^{x^2}\frac{\partial}{\partial y}$ has the function $g$ as a first integral and it is orbitally conjugate to $X_g$, via a diffeomorphism preserving $g$.}

\item {The vector field $Y=(x+y^2\cos y)\cdot X_g$ is orbitally equivalent, via a diffeomorphism preserving the integral $g$, to the vector field $(x+y^2)\cdot X_g$. This is because $codim (x+y^2\cos y)=2$ and $j^2(x+y^2\cos y)=x+y^2$.}

\item {Let $f(x,y)=ax+by+h.o.t$. The function $f$ is of codimension 1 if, and only if, one of the following holds:
\begin{itemize}
\item {$a\neq 0,b=0$. In this case the vector field $f\cdot X_g$ is orbitally $dg$--equivalent to $ax\cdot X_g$ and a transversal unfolding of it is $X_1=(c+ax)\cdot X_g$.}

\item {$a=0,b\neq 0$. In this case the vector field $f\cdot X_g$ is orbitally $dg$--equivalent to $by\cdot X_g$ and a transversal unfolding of it is $X_1=(c+by)\cdot X_g$.}

\item {$a,b\neq 0,a^2\neq b^2$. In this case the vector field $f\cdot X_g$ is orbitally $dg$--equivalent to $(ax+by)\cdot X_g$ and a transversal unfolding of it is $X_1=(c+ax+by)\cdot X_g$.}
\end{itemize}
}
\end{enumerate}   
\end{example}

\section{Conclusions}
We have presented local models for singularities of vector fields which belong to the kernel of a 1--form of the plane. This led naturally to a study of local bifurcations of integrable vector fields, in dimension 2.

As we mentioned before, in \cite{Luna, Martins Tari}, local models were presented, at least in the generic case, for integrable vector fields in any dimension. It would be very interesting to extend our results to dimensions greater than 2. This would not only provide a different proof for the results contained in \cite{Luna, Martins Tari}, but also present models for the non--generic cases as well.

We hope to be able to comment more on this, in a future publication. 

\end{document}